\newcommand\A{\mathbb A}
\renewcommand\P{\mathbb P}
\renewcommand\phi{\varphi}
\DeclareMathOperator\Bl{{Bl}}
\DeclareMathOperator\Gr{{Gr}}
\DeclareMathOperator\Isom{{Isom}}
\DeclareMathOperator\Coker{{Coker}}
\DeclareMathOperator\spec{{Spec}}
\DeclareMathOperator{\Id}{Id}
\DeclareMathOperator{\Mor}{Mor}
\DeclareMathOperator{\HOM}{\mathcal{H}om}
\newcounter{noindnum}[subsection]
\renewcommand{\thenoindnum}{\roman{noindnum}}
\newcommand{\noindstep}{\refstepcounter{noindnum}{\rm(}\thenoindnum\/{\rm)} }
\newcommand{\stepzero}{\setcounter{noindnum}{0}
}
\theoremstyle{plain}
\newtheorem{theorem}{Theorem}
\newtheorem{proposition}[subsection]{Proposition}
\newtheorem{lemma}[subsection]{Lemma}
\theoremstyle{definition}
\newtheorem{definition}[subsection]{Definition}
\theoremstyle{remark}
\newtheorem{remark}[subsection]{Remark}
\newtheorem*{remark*}{Remark}
\newtheorem*{remarks*}{Remarks}
\newtheorem{convention}[subsection]{Convention}
\newcommand\fm{{\mathfrak{m}}}
\newcommand\fp{{\mathfrak{p}}}
\newcommand\fq{{\mathfrak{q}}}
\newcommand\cB{{\mathcal{B}}}
\newcommand\cE{{\mathcal{E}}}
\newcommand\cF{{\mathcal{F}}}
\newcommand\cG{{\mathcal{G}}}
\newcommand\cH{{\mathcal{H}}}
\newcommand\cI{{\mathcal{I}}}
\newcommand\cL{{\mathcal{L}}}
\newcommand\cO{{\mathcal{O}}}
\newcommand\cT{{\mathcal{T}}}
\newcommand\cV{{\mathcal{V}}}
\newcommand\cW{{\mathcal{W}}}
\newcommand\cS{{\mathcal{S}}}
\newcommand\cY{{\mathcal{Y}}}
\newcommand\cX{{\mathcal{X}}}
\newcommand\cZ{{\mathcal{Z}}}
\newcommand\bB{{\mathbf{B}}}
\newcommand\bG{{\mathbf{G}}}
\newcommand\bH{{\mathbf{H}}}
\newcommand\bO{{\mathbf{O}}}
\newcommand\bSO{{\mathbf{SO}}}
\newcommand\bT{{\mathbf{T}}}
\newcommand\bU{{\mathbf{U}}}
\newcommand\Z{\mathbb Z}
\title[On the Grothendieck--Serre conjecture in mixed characteristic]{On the Grothendieck--Serre conjecture on principal bundles in mixed characteristic}
\author{Roman Fedorov}
\email{rmfedorov@gmail.com}
\address{Kansas State University, Manhattan, KS, USA}
\address{Max Planck Institute for Mathematics, Bonn, Germany}
\address{University of Pittsburgh, Pittsburgh, PA}
\begin{document}

\begin{abstract}
Let $R$ be a regular local ring. Let $\bG$ be a reductive $R$-group scheme. A conjecture of Grothendieck and Serre predicts that a principal $\bG$-bundle over~$R$ is trivial if it is trivial over the quotient field of $R$. The conjecture is known when~$R$ contains a field. We prove the conjecture for a large class of regular local rings \emph{not} containing fields in the case when $\bG$ is split.
\end{abstract}

\vspace{3cm}

\maketitle

\section{Introduction and main results}
Let $R$ be a regular local ring; let $\bG$ be a reductive group scheme over $R$. A~ conjecture of Grothendieck and Serre
(see~\cite[Remarque, p.31]{SerreFibres}, \cite[Remarque~3, p.26-27]{GrothendieckTorsion}, and~\cite[Remarque~1.11.a]{GrothendieckBrauer2}) predicts that a principal $\bG$-bundle over $R$ is trivial, if it is trivial over the fraction field of $R$. Recently this has been proved in the case when $R$ contains a field in~\cite{FedorovPanin}, it was extended to the case of finite fields in~\cite{PaninFiniteFieldsIzvestiya}. In this paper we consider the case when $R$ \emph{contains no field}, that is, the case of \emph{mixed characteristic}.

Note that a regular local ring $R$ contains no field if and only if there is a prime number~$p$ (necessarily unique) such that~$p$ is neither invertible nor zero in $R$. In this case $R$ contains the localization $\Z_{(p)}$ of $\Z$ at the prime ideal~$(p)=p\Z$.

Thus, we assume that $R$ is a $\Z_{(p)}$-algebra. \emph{We will also assume that $R/pR$ is a regular ring.} In this case a theorem of Popescu~\cite{Popescu,SpivakovskyPopescu,SwanOnPopescu} reduces the question to the case when $R$ is a localization of a finitely generated smooth $\Z_{(p)}$-algebra $A$ at a maximal ideal. Taking the closure of $\spec A$ in $\P_{\Z_{(p)}}^N$, we may assume that $R$ is the local ring of a closed point $x$ on an integral scheme $X$ projective over $\Z_{(p)}$.

Additionally, we will assume that (I) \emph{the fiber $X_p$ is generically reduced, and that \emph{(II)} the set of points where $X$ is not regular intersects $X_p$ in a subset of codimension at least two in $X_p$}. Note that condition~(I) is satisfied if the fiber $X_p$ is irreducible because the projection is smooth at $x$. On the other hand, both conditions are satisfied if the set of points where the projection $X\to\spec\Z_{(p)}$ fails to be smooth, has codimension at least 3 in $X$.

Below we will prove the conjecture of Grothendieck and Serre under the above assumptions when the group scheme $\bG$ is split; see Theorem~\ref{th:GrSerre}. We work in a slightly greater generality: we weaken condition~(I) and we consider projective schemes over any excellent discrete valuation ring $\Lambda$, not just $\Z_{(p)}$-schemes. In particular, $\Lambda$ can be a localization of any number ring at a maximal ideal.

We note that previously the conjecture was known in a very few mixed characteristic cases, namely, when $\bG$ is a torus~\cite{ColliotTheleneSansuc}, when $\dim R=1$, when $R$ is Henselian~\cite{Nisnevich1}. Next, the case of $\bG=\mathbf{PGL}(n,R)$ follows from a similar statement for Brauer groups~\cite[Thm.~4.3]{ColliotTheleneSansuc} (more generally, one can derive the statement for $\bG=\mathbf{PGL}(A)$, where $A$ is an Azumaya algebra over $R$). Also, in~\cite{Nisnevich2} the conjecture
is proved when $\bG$ is quasisplit and $\dim R=2$ but there it is assumed that the residue field of $R$ is infinite. Thus our results are new even in dimension two. We also note that while the current paper was under review, the subject was further developed, see~\cite{CesnaviciusGrSerre}.

\subsection{Definitions and conventions}
A group scheme $\bG$ over a scheme $S$ is called \emph{reductive\/} if it is affine and smooth as an $S$-scheme and if, moreover, all its geometric fibers are connected reductive algebraic groups. This definition of a reductive $R$-group scheme coincides with~\cite[Exp.~XIX, Def.~2.7]{SGA3-3}.

A reductive group scheme $\bG$ over a local scheme $S$ is \emph{split\/} if it contains a maximal torus $\bT\subset\bG$ such that $\bT\simeq({\mathbb G}_{m,S})^r$ for some $r$ (cf.~\cite[Exp.~XXII, Prop.~2.2]{SGA3-3}). Note that such a group scheme comes as a pullback from $\spec\Z$ (see~\cite[Exp.~XXV, Thm.~1.1]{SGA3-3}).

Let $\bG$ be a group scheme faithfully flat and finitely presented over $S$. An $S$-scheme~$\cG$ with a left action of $\bG$ is \emph{a principal $\bG$-bundle over $S$}, if $\cG$ is faithfully flat and finitely presented over $S$, and the morphism $\bG\times_S\cG\to\cG\times_S\cG$, whose first component is the action and the second is the projection, is an isomorphism (see~\cite[Sect.~6]{FGA1}). A principal $\bG$-bundle $\cE$ over $S$ is \emph{trivial\/} if there is an isomorphism of $S$-schemes $\cE\simeq\bG$ compatible with the action of $\bG$, where $\bG$ acts on itself by left multiplication. A principal $\bG$-bundle is trivial if and only if it has a section as an $S$-scheme.

If $T$ is an $S$-scheme, we will use the term ``principal $\bG$-bundle over $T$'' to mean a principal $\bG\times_ST$-bundle over $T$. We usually skip the adjective `principal' as we are only considering principal $\bG$-bundles.

Assume that $\bG$ is affine over $S$. In this case, we denote by $H^1_{\text{fppf}}(S,\bG)$ the pointed set of isomorphism classes of $\bG$-bundles over $S$  (as every such bundle is locally trivial in the fppf topology). The subset corresponding to \'etale locally trivial bundles is denoted by $H^1_{\text{\'et}}(S,\bG)$. We note that if $\bG$ is smooth over $S$, then we have
\[
    H^1_{\text{\'et}}(S,\bG)=H^1_{\text{fppf}}(S,\bG).
\]
If $T$ is an $S$-scheme and $s\in S$ is a point, we write $T_s$ for the fiber $T\times_Ss$. We write $k(s)$ for the residue field of $s$.

The symbol `$\simeq$' means that two objects are isomorphic; we use the equality `$=$' to emphasize that the isomorphism is canonical. We use boldface font for group schemes (e.g~$\bG$, $\bB$, etc.) and the calligraphic font for principal bundles (e.g.~$\cG$, $\cE$, etc.).

The notation $\#A$ stands for the number of elements of the finite set $A$.

\subsection{Main result}\label{sect:MainRes}
\begin{theorem}\label{th:GrSerre}
Let $\Lambda$ be an excellent discrete valuation ring; let $b\in\spec\Lambda$ be the closed point. Let $X$ be an integral scheme and $\pi\colon X\to\spec\Lambda$ be a flat projective morphism. Denote by $X^{sing}$ the set of $y\in X$ such that the local ring $\cO_{X,y}$ is not a regular ring. Assume that $\pi\colon X\to\spec\Lambda$ satisfies the following properties

\hspace*{1cm} (I) The smooth locus of $X_b$ is dense in $X_b$.

\hspace*{1cm} (II) The intersection $X^{sing}\cap X_b$ has codimension at least two in $X_b$.

Let $x\in X$ be a closed point such that $\pi$ is smooth at $x$. Let $\bG_{X,x}$ be a split reductive $\cO_{X,x}$-group scheme. Then a principal $\bG_{X,x}$-bundle over $\cO_{X,x}$ is trivial, if it has a rational section.
\end{theorem}

The proof of the theorem occupies Sections~\ref{sect:ProofA1}--\ref{sect:EndOfProof}.

\begin{remarks*}

$\bullet$ The set $X^{sing}$ is closed in $X$, since $\Lambda$ is excellent; see~\cite[Scholie~7.8.3(iv)]{EGAIV-2}.

$\bullet$ We note that $X^{sing}\cap X_b$ is in general smaller than the set of points where $X_b$ is not regular.

$\bullet$ The condition that $\Lambda$ is excellent is not needed. Indeed, the Grothendieck--Serre conjecture is known for regular local rings containing finite fields~\cite{PaninFiniteFieldsIzvestiya,PaninNiceTriples,PaninPurity17}. Thus we may assume that $\Lambda$ does not contain a finite field. In this case $\Lambda$ is automatically excellent; see~\cite[Scholie~7.8.3(iii)]{EGAIV-2}. However, we prefer to keep this assumption in order to have our theorem independent from Panin's results~\cite{PaninFiniteFieldsIzvestiya,PaninNiceTriples,PaninPurity17}.

$\bullet$ Condition (I) is satisfied if $X_b$ is irreducible, because $\pi$ is smooth at $x$.

$\bullet$ If the residue field of $b$ is perfect, then Condition (I) is equivalent to the condition that $X_b$ has no multiple components.

$\bullet$ We expect that, more generally, the theorem and its proof hold for the semi-local rings of finitely many closed points on $X$.
Note that the conjecture is proved in the case of semi-local Dedekind domains in~\cite{Guo2019GrSerreDedekind}, which extends the results of~\cite{Nisnevich1}. See also~\cite{PaninStavrovaDedekind} in the split case.
\end{remarks*}

The following result of independent interest will be used in the proof.
\begin{theorem}\label{th:A1}
Let $R$ be a Noetherian local ring. Let $\bH$ be a split reductive group scheme over $R$.
Let $\cF$ be a principal $\bH$-bundle over $\A^1_R:=\spec R[t]$ such that $\cF$ is trivial over the complement of a closed subscheme that is finite over $\spec R$. Then $\cF$ is trivial.
\end{theorem}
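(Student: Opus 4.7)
The plan is to extend $\cF$ from $\A^1_R$ to a $\bG$-bundle on $\P^1_R$ via the given trivialization, reduce to the closed fiber by formal GAGA, and exploit the structure of $\bG$-bundles on $\P^1$ over a field. First, since $Z\to\spec R$ is finite, the coordinate algebra $\cO_Z$ is a finite $R$-module; Cayley--Hamilton applied to multiplication by $t$ on $\cO_Z$ yields a monic polynomial $f\in R[t]$ annihilating $\cO_Z$, so $Z\subseteq V(f)$. Replacing $Z$ by $V(f)$ preserves the hypothesis (since $\cF$ is still trivial on the smaller open $\spec R[t][f^{-1}]\subseteq\A^1_R\setminus Z$), so we may assume $Z=V(f)$. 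The subscheme $V(f)$ is closed in $\P^1_R$ and disjoint from the infinity section, so $\A^1_R$ and $V:=\P^1_R\setminus V(f)$ cover $\P^1_R$ with intersection $\spec R[t][f^{-1}]$; gluing $\cF$ on $\A^1_R$ with the trivial bundle on $V$ via the given trivialization produces a $\bG$-bundle $\bar\cF$ on $\P^1_R$ that is trivialized on $V$.

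Next, to show $\bar\cF$ is trivial on $\P^1_R$ (which implies the conclusion by restriction to $\A^1_R$), pass to the completion: by faithfully flat descent along $R\to\hat R$, it suffices to prove triviality over $\hat R$. Since $\P^1_R$ is projective over $R$, formal GAGA identifies $\bG$-bundles on $\P^1_{\hat R}$ with compatible systems on the thickenings $\P^1_{R/\fm^{n+1}}$. A standard deformation-theoretic computation shows the obstructions to lifting a trivialization from $\P^1_{R/\fm^n}$ to $\P^1_{R/\fm^{n+1}}$ lie in $H^1(\P^1_k,\mathrm{Lie}(\bG)_k)\otimes_k\fm^n/\fm^{n+1}$, and these vanish because $\mathrm{Lie}(\bG)_k$ is a trivial vector bundle on $\P^1_k$ and $H^1(\P^1_k,\cO_{\P^1_k})=0$. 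The problem therefore reduces to proving triviality of $\bar\cF_k$ on $\P^1_k$.

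On the closed fiber, $\bar\cF_k$ is trivialized on $V_k\supset\{\infty\}_k$ by construction, and $\bar\cF_k|_{\A^1_k}=\cF_k$ is a $\bG$-bundle on $\A^1_k$ that is trivial by Harder's theorem for split reductive groups over a field (reducing to the simply connected semisimple case via the derived subgroup and to the torus case via $\mathrm{Pic}(\A^1_k)=0$). The two trivializations differ on the overlap $\spec k[t][f^{-1}]$ by a cocycle $g\in\bG(k[t][f^{-1}])$, and $\bar\cF_k$ is trivial if and only if $g$ factors as a product of elements from $\bG(k[t])$ and $\bG(V_k)$. Since $f$ is monic, $k[t][f^{-1}]$ embeds into $k((t^{-1}))$, and for split reductive $\bG$ the classes of such cocycles modulo these factorizations correspond to dominant cocharacters of $\bG$ via the Birkhoff decomposition for the split loop group. \textbf{The main obstacle} is showing that the cocharacter produced by our gluing is trivial --- equivalently, that triviality of $\cF$ on $\spec R[t][f^{-1}]$ for a monic $f$ forces the trivial Birkhoff type on the closed fiber. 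This is where the specific interplay between the monic structure of $f$ (placing the ``pole divisor'' on a subscheme finite and flat over $\spec R$) and the split structure of $\bG$ is essential. Once this is established, formal GAGA and faithfully flat descent propagate triviality back to $\P^1_R$, and restriction to $\A^1_R$ gives the desired conclusion.
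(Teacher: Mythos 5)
Your reduction ``the problem reduces to proving triviality of $\bar\cF_k$ on $\P^1_k$'' does not hold, and the ``main obstacle'' you acknowledge is not merely a technical difficulty but a genuine falsehood. Consider $R=k$ a field, $\bG=\gm$, $\cF=\cO_{\A^1_k}$ the trivial line bundle, $Z=V(t)$, and the trivialization of $\cF$ over $\A^1_k\setminus Z$ given by the unit $t$. Your gluing then produces $\bar\cF_k\simeq\cO_{\P^1_k}(\pm1)$, which is nontrivial on $\P^1_k$ even though $\cF$ itself is trivial. So your strategy would try to prove a false statement. The hypothesis about a monic $f$ constrains where the gluing happens, but it places no constraint whatsoever on the transition function at that locus, which is what determines the Birkhoff type; nothing in the data forces the cocharacter to vanish.

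The correct move, and the one the paper makes, is not to show that the particular extension has trivial closed fiber, but to \emph{change the extension at the gluing locus} so that the closed fiber becomes trivial while the restriction to $\A^1_R$ is unchanged. Concretely, the paper glues the trivial bundle at the divisor $Y=0\times\spec R$ rather than at $V(f)$, and compares two trivializations over the punctured formal disc at $Y_x$; they differ by some $\alpha\in\bG\bigl(k(x)((t))\bigr)$. The crucial lemma is that $\alpha$ lifts to $\tilde\alpha\in\bG\bigl(R((t))\bigr)$, using the Iwasawa-type decomposition $\bG\bigl(k((t))\bigr)=\bT\bigl(k((t))\bigr)\cdot E$ for split $\bG$ and the fact that $R$ is local (nonzero elements of $k((t))$ lift to units of $R((t))$). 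Regluing by $\tilde\alpha^{-1}$ gives a bundle on $\P^1_R$ whose closed fiber \emph{is} trivial, and then a Horrocks-type statement (Proposition~\ref{pr:Horrocks} in the paper) shows it is a pullback from $\spec R$, hence trivial by evaluation at $0\times\spec R$. Your deformation/GAGA/descent chain also has an unfinished step: formal GAGA gives a trivialization over $\P^1_{\hat R}$, but descending triviality from $\hat R$ to $R$ is not an instance of faithfully flat descent and requires a separate argument (the paper's Horrocks proposition handles this via representability and affineness of $\mathbf{GL}_n/\bG$ over the generic point). Both of these are where the real work lives, and your proposal leaves them open.
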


This theorem is similar to~\cite[Thm.~1.3]{PaninStavrovaVavilov} and to~\cite[Thm.~3]{FedorovPanin}. It will be proved in Section~\ref{sect:ProofA1}. Note that the ring $R$ is not required to be regular.

\subsection{Example: quadratic forms} We have the following relative result. Let $R$ be a regular local ring and let the $R$-group scheme $\bSO_n$ be the split form of the special orthogonal group scheme.

\begin{theorem}\label{th:QuadForms}
Let $R$ be a regular local ring such that the Grothendieck--Serre conjecture holds for $R$ and $\bSO_{2n}$. Assume that 2 is invertible in $R$. Let $Q=\sum_{i,j}q_{ij}x_ix_j$ and $Q'=\sum_{i,j}q'_{ij}x_ix_j$ be quadratic forms in $n$ variables with coefficients in $R$ such that their discriminants are invertible in $R$. Assume that there is a linear transformation with coefficients in the fraction field of $R$, taking $Q$ to $Q'$. Then there is a linear transformation with coefficients in $R$ taking $Q$ to $Q'$.
\end{theorem}

When 2 is not necessarily invertible in $R$ we have the following result. Define the split quadratic form over $R$ as follows
\[
    Q_n=x_1x_{m+1}+\dotsb+x_mx_{2m}\text{ if }n=2m
\]
and
\[
    Q_n=x_1x_{m+1}+\dotsb+x_mx_{2m}+x_{2m+1}^2\text{ if }n=2m+1.
\]

Note that $\bSO_n$ is the special orthogonal group scheme associated to $Q_n$ (see~\cite[Ch.~IV, Sect.~5]{KnusBook} for the correct definition in the case when 2 is not invertible in $R$). Recall (see e.g.~\cite[Ch.~IV, Sect.~3]{KnusBook}) that if $n$ is odd and $Q$ is a quadratic form with coefficients in $R$, then one can define its half-discriminant (which is just $1/2$ times the discriminant if 2 is invertible in $R$).

\begin{theorem}\label{th:QuadForms2}
Let $R$ be a regular local ring such that the Grothendieck--Serre conjecture holds for $R$ and $\bSO_n$. Let $Q=\sum_{i,j}q_{ij}x_ix_j$ be a quadratic form in $n$ variables with coefficients in $R$ such that its discriminant is invertible in $R$ if $n$ is even, and its half-discriminant is invertible in $R$ if $n$ is odd. Assume that there is a linear transformation with coefficients in the fraction field of $R$, taking $Q$ to $Q_n$. Then there is a linear transformation with coefficients in $R$ taking $Q$ to $Q_n$.
\end{theorem}

Note that, if $X$ and $x$ are as in Theorem~\ref{th:GrSerre}, then the conditions of the above two theorems are satisfied for $R=\cO_{X,x}$.
Theorems~\ref{th:QuadForms} and~\ref{th:QuadForms2} are proved in the last section.

\subsection{Outline of the paper} We start by proving Theorem~\ref{th:A1} in Section~\ref{sect:ProofA1}. After that we proceed with the proof of Theorem~\ref{th:GrSerre}. Let us give a brief overview of the proof. By~\cite{Nisnevich1} we may assume that the relative dimension of the flat morphism $X\to\spec\Lambda$ is at least one.

The fist step in the proof is to fiber a neighborhood of $x$ in $X$ into curves. Thus we choose an appropriate neighborhood $X'$ of $x$ in $X$ and a smooth fibration $X'\to S$ of relative dimension one, having some nice properties (see Definition~\ref{def:QEF} below). We extend $\cG$ to a principal bundle $\cF$ over $X'$ such that $\cF$ is trivial over the complement of a subscheme finite over $S$. This step, carried out in Section~\ref{sect:QElFib} differs crucially from the equal characteristic case. In particular, we use the fact that a generically trivial principal bundle can be reduced to a Borel subgroup on the complement of a codimension two subscheme, see Proposition~\ref{pr:codim2}.

In Section~\ref{sect:EndOfProof} we complete the proof of Theorem~\ref{th:GrSerre} as follows. We pull $\cF$ back to an open subset of $X'\times_S U$, where $U:=\spec\cO_{X,x}$. Then, we descend the bundle obtained to $\A^1_U$, employing the theory of nice triples of Panin (cf.~\cite[Def.~3.1]{PaninStavrovaVavilov} and Definition~\ref{def:nice} below), reducing Theorem~\ref{th:GrSerre} to Theorem~\ref{th:A1}. See Remark~\ref{rm:nicetriples} regarding the rationale for using nice triples.

In Section~\ref{sect:QuadForms} we prove Theorems~\ref{th:QuadForms} and~\ref{th:QuadForms2}.

\subsection{Acknowledgments} The author is very grateful to Ivan Panin for introducing him to the subject and to the techniques. In fact, the main ideas of this paper grew out of a conversation with Panin at the University of Mainz. The author also wants to thank Dima Arinkin and Kestutis \v{C}esnavi\v{c}ius for useful comments. The author is particularly grateful to Brian Conrad for explaining to him how to obtain a result about quadratic forms in the case when 2 is not invertible in the ring (Theorem~\ref{th:QuadForms2}) and for other comments. The author thanks the anonymous referees for very valuable suggestions. One of the referees explained to the author how to simplify certain proofs and how to strengthen the statement of Theorem~\ref{th:QuadForms}.

The author is partially supported by NSF grants DMS-1406532 and DMS-2001516. A major part of the paper was written, while the author held a fellowship at Max Planck Institute for Mathematics in Bonn. He wants to thank the Institute for the hospitality.

\section{Bundles over $\A^1$: Proof of Theorem~\ref{th:A1}}\label{sect:ProofA1}

\subsection{Horrocks type statement}
The following statement and its proof are close to~\cite[Thm.~9.6]{PaninStavrovaVavilov}.

\begin{proposition}\label{pr:Horrocks}
Let $R$ be a Noetherian local ring, $U:=\spec R$. Let $x\in U$ be the closed point. Let~$\bH$ be a reductive group scheme over $U$ such that there is an embedding $\bH\to\mathbf{GL}(n,U)$ for some positive integer $n$. Let $\P^1_x$ be the $x$-fiber of the projection $\P^1_U\to U$, let $\bH_x$ be the $x$-fiber of $\bH$. Let $\cH$ be an $\bH$-bundle over $\P^1_U$ such that its restriction to $\P^1_x$ is a~trivial $\bH_x$-bundle. Then $\cH$ is isomorphic to the pullback of an $\bH$-bundle over $U$.
\end{proposition}
\begin{proof}
By~\cite[Cor.~6.12(ii)]{ColliotTS1979fibres} the quotient $\cX:=\mathbf{GL}(n,U)/\bH$ is represented by an affine scheme.

Consider the associated $\mathbf{GL}(n,U)$-bundle $\cH':=\mathbf{GL}(n,U)\times^\bH\cH$. Let, under the equivalence between $\mathbf{GL}(n,U)$-bundles and rank $n$ locally free coherent sheaves, $\cH'$ correspond to the sheaf $\cS$ on $\P_U^1$. By the assumption on $\cH$, $\cS_x$ is isomorphic to $\oplus_{i=1}^n\cO_{\P_x^1}$. Thus
\[
    H^1(\P^1_x,\HOM(\cS_x,\cS_x))=H^1(\P^1_x,\oplus_{i=1}^{n^2}\cO_{\P^1_x})=0.
\]
Therefore, according to~\cite[Cor.~4.6.4]{EGAIII-1}, $\cS$ is a free sheaf. Thus $\cH'$ is trivial.

Consider the morphism of exact sequences, induced by the canonical projection $pr_U\colon\P^1_U\to U$,
\[
\begin{CD}
\Mor_U(U,\cX) @>\partial>> H^1_{\text{\'et}}(U,\bH) @>>> H^1_{\text{\'et}}(U,\mathbf{GL}(n,U))\\
@V pr_U^* VV @VVV @VVV\\
\Mor_U(\P^1_U,\cX) @>\partial>> H^1_{\text{\'et}}(\P^1_U,\bH) @>>> H^1_{\text{\'et}}(\P^1_U,\mathbf{GL}(n,U)).
\end{CD}
\]
The class of $[\cH]\in H_{\text{\'et}}^1(\P^1_U,\bH)$ is in the image of $\partial$, because $\cH'$ is trivial. It remains to show that the morphism $pr_U^*$ is surjective. This follows from~\cite[Prop.~6.1]{MumfordEtAl1994GIT}. This proposition is applicable because $\P^1_x$ is projective and $\cX_x$ is affine, so any morphism $\P^1_x\to\cX_x$ must be constant.
\end{proof}

\subsection{Gluing principal bundles}\label{sect:glue}
As before, let $R$ be a Noetherian local ring, $U:=\spec R$. Let~$\bH$ be a split reductive group scheme over $U$.
Let $Y=0\times U$ be the zero section in $\P_U^1$. Let $D_Y:=\spec R[[t]]$ be the ``formal disc around $Y$'', let $\dot D_Y:=\spec R((t))$ be the ``punctured formal disc.'' There is commutative diagram of morphisms of $U$-schemes (see~\cite[Sect.~4.1]{FedorovExotic} for details)
\begin{equation*}
\begin{CD}
\dot D_Y @>>> D_Y \\
@VVV @VVV \\
\P^1_U-Y @>>> \P^1_U.
\end{CD}
\end{equation*}
Further, we explained in~\cite{FedorovExotic} that given an $\bH$-bundle over $\P^1_U-Y$, an $\bH$-bundle over $D_Y$, and an isomorphism between their restrictions to $\dot D_Y$, we can glue the bundles to make an $\bH$-bundle over $\P^1_U$; see~\cite[Prop.~4.4]{FedorovExotic}.

This construction can be used to modify $\bH$-bundles over $\P^1_U$ in the following sense. Given an $\bH$-bundle $\cH$ over $\P^1_U$, its trivialization over $\dot D_Y$, and a loop $\alpha\in\bH\bigl(R((t))\bigr)$, we construct a new $\bH$-bundle $\cH(\alpha)$ over $\P^1_U$ as follows. We view~$\alpha$ as an isomorphism between $\cH|_{\dot D_Y}$ and the trivial $\bH$-bundle over $\dot D_Y$, and use it to glue $\cH|_{\P^1_U-Y}$ with the trivial $\bH$-bundle over $D_Y$.

\subsection{End of the proof of Theorem~\ref{th:A1}} We use the notations from the statement of Theorem~\ref{th:A1}. As before, let $U:=\spec R$ and let $Y=0\times U\subset\P^1_U$. Since $\P^1_U-Y\simeq\A^1_U$, we may view $\cF$ as an $\bH$-bundle over $\P^1_U-Y$. Let us trivialize $\cF$ on a complement of a~subscheme $Z\subset\P^1_U-Y$ finite over $U$. Note that $Z$ is closed in $\P^1_U$.
Let us extend $\cF$ to an $\bH$-bundle $\tilde\cF$ over $\P^1_U$ by gluing $\cF$ with the trivial bundle over $\P^1_U-Z$ (observe that both bundles are trivial over the intersection $\P^1_U-Y-Z$).

Consider the $\bH_x$-bundle $\tilde\cF_x$ over $\P^1_x$ obtained by restricting $\tilde\cF$. Note that $\tilde\cF_x$ is generically trivial because it is trivial over $\P^1_x-Z_x$. Thus it is trivial over $\P^1_x-0$ by~\cite[Cor.~3.10(a)]{GilleTorseurs}. Fix such a trivialization.

On the other hand, $\tilde\cF$ is trivialized over $D_Y$, as the morphism $D_Y\to\P^1_U$ factors through $\P^1_U-Z$. Fix such a trivialization, it gives rise to a trivialization of $\tilde\cF_x$ over~$D_{Y_x}$.

Thus we get two trivializations of $\tilde\cF_x$ over $\dot D_{Y_x}$; they differ by an element
\[
    \alpha\in\bH(\dot D_{Y_x})=\bH\bigl(k((t))\bigr),
\]
where $k:=k(x)$.

\begin{lemma}
    The natural map $\bH\bigl(R((t))\bigr)\to\bH\bigl(k((t))\bigr)$ is surjective.
\end{lemma}
\begin{proof}
Let $\bT$ be a split maximal torus in $\bH$. Let $\bB$ be a Borel subgroup scheme such that $\bT\subset\bB\subset\bH$. Let $\bB^-$ be the opposite Borel subgroup scheme (see~\cite[Exp.~XXII, Prop.~5.9.2]{SGA3-3}). Let $\bU^-$ and $\bU$ be the unipotent radicals of $\bB^-$ and $\bB$ respectively. Let $E$ be the subgroup of the abstract group $\bH\bigl(k((t))\bigr)$ generated by $\bU^-\bigl(k((t))\bigr)$ and $\bU\bigl(k((t))\bigr)$. It follows from~\cite[Exp.~XXVI, Cor.~5.2]{SGA3-3} that $\bH\bigl(k((t))\bigr)=\bT\bigl(k((t))\bigr)\cdot E$.

Next, every element of $E$ extends to $\bH\bigl(R((t))\bigr)$, see~\cite[Lemma~5.24]{FedorovPanin}. Thus, it remains to show that every element of $\bT\bigl(k((t))\bigr)$ extends to $\bT\bigl(R((t))\bigr)$. Since $\bT$ is split, it is enough to show that every non-zero element of $k((t))$ extends to an~invertible element of $R((t))$, which is obvious because $R$ is local.
\end{proof}

By the previous lemma, we can extend the loop $\alpha$ to a loop $\tilde\alpha\in\bH\bigl(R((t))\bigr)$. Since $\tilde\cF$ is trivialized over $\dot D_Y$ and $\tilde\alpha^{-1}\in\bH\bigl(R((t))\bigr)$, we obtain a new principal bundle $\tilde\cF(\tilde\alpha^{-1})$ over $\P_U^1$ (see the end of Section~\ref{sect:glue}).

It is easy to see from the construction, that the restriction of $\tilde\cF(\tilde\alpha^{-1})$ to $\P^1_x$ is a trivial $\bH_x$-bundle. Indeed, $\tilde\cF(\tilde\alpha^{-1})|_{\P^1_x}\simeq\tilde\cF_x(\alpha^{-1})$ and $\alpha$ was chosen in such a way that the trivialization of $\tilde\cF_x$ on $\P^1_x-Y_x$ extends to a trivialization of $\tilde\cF_x(\alpha^{-1})$ on $\P^1_x$ (cf.~\cite[Prop.~5.22]{FedorovPanin}).

By~\cite[Cor.~6.12(i)]{ColliotTS1979fibres} there is an embedding $\bH\to\mathbf{GL}(n,U)$. Then by Proposition~\ref{pr:Horrocks}, $\tilde\cF(\tilde\alpha^{-1})$ is isomorphic to a pullback of an~$\bH$-bundle over $U$. Since the restriction of $\tilde\cF(\tilde\alpha^{-1})$ to $Y=0\times U$ is trivial, we see that $\tilde\cF(\tilde\alpha^{-1})$ is trivial. Finally, we see that
\[
    \cF\simeq\tilde\cF(\tilde\alpha^{-1})|_{\P^1_U-(0\times U)}
\]
is trivial. The proof of Theorem~\ref{th:A1} is complete.\qed

\section{Quasi-elementary fibrations}\label{sect:QElFib}
Now we start the proof of Theorem~\ref{th:GrSerre}, which will occupy this and the next sections. In this section we introduce the notion of a quasi-elementary fibration. The main result is Proposition~\ref{pr:QElFib}, which lets us construct a quasi-elementary fibration from the data of Theorem~\ref{th:GrSerre}. We keep the notations from Theorem~\ref{th:GrSerre}. Set $U:=\spec\cO_{X,x}$. We may identify the unique closed point of $U$ with~$x$; denote the residue field of $x$ by $k(x)$. As we have already mentioned, there is a split reductive $\Z$-group scheme $\bG_\Z$ such that $\bG_{X,x}\simeq\bG_\Z\times_\Z\spec\cO_{X,x}$ (see~\cite[Exp.~XXV, Thm.~1.1]{SGA3-3}). Set $\bG:=\bG_\Z\times_\Z\spec\Lambda$; this is a split reductive $\Lambda$-group scheme. Then we have $\bG_{X,x}\simeq\bG\times_\Lambda\spec\cO_{X,x}$. We use this isomorphism to identify the group schemes. Thus, according to our convention, principal $\bG_{X,x}$-bundles are the same as principal $\bG$-bundles.

\subsection{Definition of quasi-elementary fibrations}
The notion of an elementary fibration was introduced in~\cite[Exp.~XI, Def.~3.1]{SGA4-3}. The following notion is a weak version of elementary fibration: we only assume that the projection is smooth over the open part, we do not require the fibers to be integral, and we only require the divisor to be finite surjective over the base (see also~\cite[Def.~2.1]{PaninStavrovaVavilov}).
\begin{definition}\label{def:QEF}
    A \emph{quasi-elementary fibration\/} is an affine smooth morphism of Noetherian schemes $p\colon X'\to S$ that can be included in a commutative diagram
    \[
        \xymatrix{X'\ar[r]^j\ar[dr]_p & \overline X\ar[d]_{\bar p} & Y \ar[l]_i\ar[dl]^q\\
           & S &               }
    \]
    satisfying the following conditions\\
    \stepzero\noindstep $\bar p$ is flat projective of pure relative dimension one;\\
        \noindstep $j$ is an open embedding, $i$ is a closed embedding, and $X'=\overline X-Y$;\\
        \noindstep $\overline X$ is a regular scheme of pure dimension;\\
        \noindstep $q$ is finite surjective.
\end{definition}

We note that $S$ is automatically regular, see~\cite[Prop.~17.3.3(i)]{EGAIV-1}.

\begin{convention}\label{conv:shrinking}
Let $S$ be a scheme, let $T_i$ be $S$-schemes, and let $s\in S$ be a point. By \emph{shrinking $(S,s)$} we mean replacing $S$ by a Zariski neighborhood $S'$ of $s$ and replacing each $T_i$ by $T_i\times_SS'$.
\end{convention}

\subsection{General preliminaries} The following lemma will be used many times, in particular, for constructing quasi-elementary fibrations.
\begin{lemma}\label{lm:finitedivisor}
    Let $T$ and $S$ be Noetherian schemes. Let $\phi\colon T\to S$ be a projective morphism with fibers of dimension one (but not necessarily of pure dimension), let $s\in S$ be a closed point. Let $T_1,T_2\subset T$ be closed subschemes finite over $S$ and such that $T_1\cap T_2=\emptyset$. Then

\stepzero\noindstep\label{FD1}
        If $\cL$ is an $S$-ample line bundle over $T$, then for all large $N$ we may shrink $(S,s)$ so that we can find $\sigma\in H^0(T,\cL^{\otimes N})$ such that $\sigma$ vanishes on $T_1$ and does not vanish at any point of $T_2$.

\noindstep\label{FD2} After shrinking $(S,s)$, we can find a closed subset $D\subset T$ such that $D$ is finite over~$S$, $T_1\subset D$, $T_2\cap D=\emptyset$, and $T-D$ is affine over $S$.

\noindstep\label{FD3} After shrinking $(S,s)$, we can find a finite surjective $S$-morphism $\Pi\colon T\to\P_S^1$ such that $\Pi(T_1)\subset0\times S$, $\Pi(T_2)\subset\infty\times S$.
\end{lemma}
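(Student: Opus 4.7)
The plan is to deduce all three parts from a single construction: use Serre vanishing on the one-dimensional fiber $T_s$ to produce a section of a sufficiently high tensor power of $\cL$ with prescribed vanishing on $T_{1,s}$ and non-vanishing on a chosen finite subset disjoint from $T_{1,s}$, then lift this section from $T_s$ to a Zariski neighborhood of $s$ in $T$ via cohomology-and-base-change (shrinking $S$ as needed), and finally read off the required divisor or morphism.

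For part (a), the exact sequence
\[
0 \to \cI_{T_{1,s}\cup T_{2,s}}\otimes\cL_s^{\otimes N} \to \cI_{T_{1,s}}\otimes\cL_s^{\otimes N} \to \cL_s^{\otimes N}|_{T_{2,s}} \to 0
\]
(available because $T_1\cap T_2=\emptyset$), combined with $H^1$-vanishing of the leftmost term for $N\gg0$ (Serre vanishing on the projective one-dimensional $T_s$ with ample $\cL_s:=\cL|_{T_s}$), yields surjectivity of $H^0(T_s,\cI_{T_{1,s}}\otimes\cL_s^{\otimes N})\twoheadrightarrow H^0(T_{2,s},\cL_s^{\otimes N}|_{T_{2,s}})$; since $T_{2,s}$ is zero-dimensional, the target admits a nowhere-vanishing element, producing the desired section $\bar\sigma$ on $T_s$. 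The same vanishing applied on $T$, combined with the base-change theorem, forces $R^1\phi_*(\cI_{T_1}\otimes\cL^{\otimes N})=0$ and $\phi_*(\cI_{T_1}\otimes\cL^{\otimes N})$ to commute with base change on a neighborhood of $s$, so $\bar\sigma$ lifts to $\sigma\in H^0(T,\cL^{\otimes N})$ after shrinking $(S,s)$; a further shrink (using that $T_2\to S$ is finite and hence closed) secures non-vanishing on all of $T_2$.

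For part (b) I would strengthen the construction by also requiring $\bar\sigma(c_C)\neq0$ for one auxiliary closed point $c_C\in C\setminus(T_{1,s}\cup T_{2,s})$ on each irreducible component $C$ of $T_s$ --- handled by the same surjectivity argument applied to $T_{2,s}\cup\{c_C\}_C$. Then $D:=\{\sigma=0\}$ is zero-dimensional on $T_s$ and contains no component; upper semicontinuity of fiber dimension for the projective morphism $D\to S$ lets us shrink $S$ so that $D\to S$ is finite, and the same non-vanishing on components makes $\sigma$ a local non-zero-divisor near $T_s$ (after excluding embedded associated points of $T$ by a further shrink of $S$), so $D$ is an effective Cartier divisor. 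Affineness of $T-D$ is the standard criterion for complements of ample divisors in projective schemes.

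For part (c) I would apply (b) twice to obtain $D_\infty\supset T_2$ disjoint from $T_1$, cut out by $\sigma_\infty$, and then $D_0\supset T_1$ disjoint from $D_\infty$, cut out by $\sigma_0$, taking a common tensor power of $\cL$. Since $D_0\cap D_\infty=\emptyset$, the pair $(\sigma_0,\sigma_\infty)$ has no common zero and defines $\Pi\colon T\to\P^1_S$ with $\Pi(T_1)\subset 0\times S$ and $\Pi(T_2)\subset\infty\times S$; finiteness of $\Pi$ amounts to non-proportionality of $\sigma_0|_C$ and $\sigma_\infty|_C$ on each component $C$ of $T_s$, which holds automatically on components meeting $T_1\cup T_2$ and can be forced on the rest by imposing one more auxiliary vanishing condition on $\sigma_0$, exactly as in (b); surjectivity then follows from properness plus the fact that $\Pi$ hits both $0$ and $\infty$. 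The main obstacle throughout is making everything work over a possibly finite residue field $k(s)$: one cannot appeal to ``generic'' sections, so every vanishing/non-vanishing condition must be phrased as the surjectivity of a linear evaluation map --- which is what the trick of pre-selecting one extra closed point per component achieves --- and a secondary technical point is ensuring $\sigma$ is a genuine non-zero-divisor rather than merely non-vanishing on each component of $T_s$, which requires controlling the embedded associated points of $T$ near the fiber.
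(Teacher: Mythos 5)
Your overall strategy --- use relative Serre vanishing for high powers of $\cL$ to produce a section vanishing on $T_1$ and nonzero on a chosen finite set, then read off (b) from its zero locus and (c) from a pair of such sections --- is precisely the paper's, and you correctly identify the main difficulty as making this work over a possibly finite residue field by phrasing everything through surjectivity of evaluation maps.

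There is, however, a genuine gap in your proof of (a), in the step that lifts $\bar\sigma$ from $T_s$ to a Zariski neighbourhood. You invoke cohomology and base change to conclude that $\phi_*(\cI_{T_1}\otimes\cL^{\otimes N})$ surjects onto $H^0(T_s,\cI_{T_{1,s}}\otimes\cL_s^{\otimes N})$, but that theorem requires $\cI_{T_1}\otimes\cL^{\otimes N}$ (equivalently $\cI_{T_1}$) to be flat over $S$, and neither $\phi$ nor $T_1\to S$ is assumed flat here. Moreover, without flatness of $T_1/S$ one does not even have $(\cI_{T_1})|_{T_s}\simeq\cI_{T_{1,s}}$, so the restriction of a global section of $\cI_{T_1}\otimes\cL^{\otimes N}$ need not land where your fibre computation predicts. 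The paper avoids restricting to the fibre altogether: it forms the ``mixed'' closed subscheme $T_0:=T_1\cup(T_2)_s$ of $T$ and uses the exact sequence
\[
0\to\cI_{T_0}\otimes\cL^{\otimes N}\to\cI_{T_1}\otimes\cL^{\otimes N}\to\cL^{\otimes N}|_{(T_2)_s}\to0
\]
of sheaves on $T$; relative Serre vanishing kills $R^1\phi_*$ of the leftmost term near $s$ for $N\gg0$, so over an affine neighbourhood of $s$ the map $\phi_*(\cI_{T_1}\otimes\cL^{\otimes N})\to\phi_*(\cL^{\otimes N}|_{(T_2)_s})$ is surjective, and any preimage of a nowhere-vanishing element of $H^0((T_2)_s,\cL^{\otimes N})$ is the desired $\sigma$. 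You should replace your fibre-then-lift argument by this one. Separately, in (c) your surjectivity argument (``$\Pi$ hits both $0$ and $\infty$'') is insufficient, since $T_1,T_2$ need not surject onto $S$; the paper checks surjectivity at each generic point $\omega$ of $S$ by noting that a finite non-surjective map $T_\omega\to\P^1_\omega$ would have finite image, contradicting $\dim T_\omega=1$.
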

\begin{proof}
For part~\eqref{FD1}, consider $T_0:=T_1\cup(T_2)_s$ and let $I_{T_0}$ be the sheaf of ideals of~$T_0$. Notice that $R^1\phi_*(\cL^{\otimes N}\otimes I_{T_0})$ vanishes in a neighborhood of $s$ for large $N$. Thus, after shrinking $(S,s)$, we can find a section of $\cL^{\otimes N}$ such that this section vanishes on~$T_1$ and does not vanish at any point of $(T_2)_s$. It remains to shrink $(S,s)$ again.

For part~\eqref{FD2}, we may choose a very ample line bundle $\cL$ over $T/S$. By enlarging~$T_2$, we may assume that it contains a closed point in each one-dimensional irreducible component of $T_s$. Let $\sigma$ be a section of $\cL^{\otimes N}$ provided by the first part, let $D$ be the zero locus of $\sigma$. Then the fiber of $D$ over $s$ is finite. Since $D$ is projective over~$S$, dimensions of the fibers are upper semicontinuous (see~\cite[Cor.~13.1.5]{EGAIV-3}). Thus, after shrinking $(S,s)$, we may assume that $D$ is quasi-finite over~$S$. Since $D$ is projective over~$S$, it is finite over~$S$. Now $T-D$ is affine over~$S$ because $\cL^{\otimes N}$ is very ample.

For part~\eqref{FD3}, we may assume that each of $T_1$ and $T_2$ contains at least one point on each irreducible one-dimensional component of $T_s$. Let $\cL$ be a very ample line bundle on $T/S$. Thus, by part~\eqref{FD1}, by shrinking $(S,s)$ and replacing $\cL$ by its power, we can find a section $\tau_1$ of $\cL$ such that $\tau_1$ vanishes on $T_1$ but not at the points of $T_2$. Let $T'$ be the zero set of $\tau_1$.

As in part~\eqref{FD2}, we may assume that $T'$ is finite over $S$. Shrinking~$(S,s)$ and applying part~\eqref{FD1} again, we see that there is a section $\tau_2$ of $\cL^{\otimes N}$ for some $N>0$ such that $\tau_2$ vanishes on $T_2$ but not at the points of $T'$.

Consider the projective morphism $\Pi\colon T\to\P_S^1$ given by $[\tau_1^N:\tau_2]$. Its restriction to~$T_s$ is finite because it is a morphism of one-dimensional projective schemes $T_s\to\P_s^1$ such that both the preimage of zero and the preimage of infinity intersect all one-dimensional components of $T_s$. Thus, by shrinking $(S,s)$, we may assume that $\Pi$ is finite. Clearly, we have $\Pi(T_1)\subset0\times S$ and $\Pi(T_2)\subset\infty\times S$.

It remains to show that $\Pi$ is surjective. Since $\Pi$ is closed (being finite), we only need to check that for any generic point $\omega$ of $S$ the base-changed morphism $\Pi_\omega\colon T_\omega\to\P_\omega^1$ is surjective. If not, then its image is finite, so $\Pi_\omega$ cannot be finite because $T_\omega$ is one-dimensional.  This contradiction completes the proof of surjectivity.
\end{proof}

\subsection{Weighted blow-ups}\label{sect:bl} The scheme $\overline X$ in Definition~\ref{def:QEF} will be constructed via blowing up, similarly, to the Artin's result~\cite[Exp.~XI, Prop.~3.3]{SGA4-3}. However, since Proposition~\ref{pr:Poonen} below produces hypersurfaces rather than hyperplanes, we will need to do a weighted version of blowing up. Denote by $\P_\Z(l_0,\dotsc,l_m)$ the weighted projective space, that is,
\[
    \P_\Z(l_0,\dotsc,l_m):=\mathrm{Proj}(\Z[x_0,\dotsc,x_m]),\qquad\deg x_i=l_i.
\]
For a Noetherian scheme $S$, set $\P_S(l_0,\dotsc,l_m):=\P_\Z(l_0,\dotsc,l_m)\times S$.

Let $Z$ be a reduced Noetherian scheme, let $\cL$ be an invertible sheaf on $Z$ and let
\[
    \sigma_i\in H^0(Z,\cL^{\otimes l_i}),\qquad i=0,\dotsc,m.
\]
Let $Z_0$ be the intersection of the zero loci of $\sigma_i$. The sections $\sigma_i$ give rise to a morphism
\[
    Z-Z_0\xrightarrow{\mu}\P_\Z(l_0,\dotsc,l_m).
\]

Denote by $\Bl_{\sigma_0,\dotsc,\sigma_m}(Z)$ the closure of the graph of $\mu$ in
\[
    \P_Z(l_0,\dotsc,l_m)=Z\times\P_\Z(l_0,\dotsc,l_m).
\]
We call it a~\emph{weighted blow-up of $Z$ along $Z_0$.} We view it as a scheme with reduced scheme structure. We note that it is quite different from the usual blow-up. In particular, it depends on sections $\sigma_0$, \ldots, $\sigma_m$ and not just on their common zero-locus $Z_0$.

We have a projection
\[
    \lambda\colon \Bl_{\sigma_0,\dotsc,\sigma_m}(Z)\to Z;
\]
this is a projective morphism. Note the following easy lemma.

\begin{lemma}\label{lm:bl}
Notation as above, the base change of $\lambda$ with respect to the inclusion $Z-Z_0\hookrightarrow Z$ is an isomorphism.
\end{lemma}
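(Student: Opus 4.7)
The plan is straightforward. The point of $\Bl_{\sigma_0,\ldots,\sigma_m}(Z)$ is that it is built as the closure of the graph $\Gamma_\mu$ of the morphism $\mu\colon Z-Z_0\to\P_\Z(l_0,\ldots,l_m)$ inside $\P_Z(l_0,\ldots,l_m)$. Since open base change commutes with taking preimages, the base change of $\lambda$ along $Z-Z_0\hookrightarrow Z$ equals the intersection
\[
    \Bl_{\sigma_0,\ldots,\sigma_m}(Z)\cap\P_{Z-Z_0}(l_0,\ldots,l_m),
\]
taken inside $\P_Z(l_0,\ldots,l_m)$. Here $\P_{Z-Z_0}(l_0,\ldots,l_m)$ is open in $\P_Z(l_0,\ldots,l_m)$.

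First I would observe that $\Gamma_\mu$ is \emph{closed} in $\P_{Z-Z_0}(l_0,\ldots,l_m)$: it is the graph of a morphism to $\P_\Z(l_0,\ldots,l_m)$, which is separated over $\Z$, so $\Gamma_\mu$ is closed in $(Z-Z_0)\times_\Z\P_\Z(l_0,\ldots,l_m)=\P_{Z-Z_0}(l_0,\ldots,l_m)$. Next I would apply the elementary fact that for any subset $Y$ of an open subset $V$ of a topological space $X$, the closure $\bar Y$ taken in $X$ satisfies $\bar Y\cap V=\bar Y^V$, the closure of $Y$ in $V$; applied to $Y=\Gamma_\mu$ and $V=\P_{Z-Z_0}(l_0,\ldots,l_m)$, this gives that the underlying topological space of $\Bl_{\sigma_0,\ldots,\sigma_m}(Z)\cap\P_{Z-Z_0}(l_0,\ldots,l_m)$ is $\Gamma_\mu$. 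Since the blowup carries the reduced scheme structure and $\Gamma_\mu\cong Z-Z_0$ is already reduced (as $Z$ is reduced and $Z-Z_0$ is open in $Z$), this identification is scheme-theoretic.

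Finally, the first projection $\Gamma_\mu\to Z-Z_0$ has the section $z\mapsto(z,\mu(z))$ as a two-sided inverse, so it is an isomorphism. Since $\lambda$ restricts to this projection on the preimage of $Z-Z_0$, the base-changed morphism is an isomorphism, as claimed. No step is a real obstacle here; the only thing to be slightly careful about is the reduced structure, which is handled by observing that both sides of the identification are reduced and have the same underlying set.
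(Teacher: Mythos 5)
The paper states this lemma without proof, calling it ``easy,'' and your argument is exactly the natural one: closure commutes with restriction to an open subset, the graph $\Gamma_\mu$ is already closed in $\P_{Z-Z_0}(l_0,\ldots,l_m)$ by separatedness of weighted projective space over $\Z$, and the reduced structures agree because $Z$ (hence $Z-Z_0$, hence $\Gamma_\mu$) is reduced. Your proof is correct and matches the intended approach.
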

\begin{proof}
Results from the construction.
\end{proof}

We will consider weighted blow-ups only in the case, when $l_0=1$. For a scheme $S$, denote by $\hat\A_S$ the open subset of $\P_S(1,l_1,\dotsc,l_m)$ given by $x_0\ne0$. If $S=\spec A$, then $\hat\A_S=\spec A[y_1,\dotsc,y_m]$, where $y_i=x_i/x_0^{l_i}$. Thus, for any $S$, we have a canonical isomorphism $\hat\A_S=\A_S^m$, in particular, $\hat\A_S$ is smooth over $S$.

In our applications, $Z$ and $Z_0$ will be of pure dimensions and we will have $\dim Z-\dim Z_0=m+1$, in particular, $Z_0$ will be a complete intersection in $Z$.

It is well-known that the blow-up of a regular scheme along a regular closed subscheme is regular (see e.g.~\cite[Thm.~8.1.19]{Liu2002AG}). We prove a slightly weaker statement in the weighted case, which is still sufficient for our purposes.
\begin{lemma}\label{lm:BlReg}
    Notation as above, assume that $Z$ and $Z_0$ are regular schemes and that $Z_0$ is of pure codimension $m+1$ in $Z$. Then $\Bl_{\sigma_0,\dotsc,\sigma_m}(Z)\cap\hat\A_Z$ is a regular scheme. The dimension of $\Bl_{\sigma_0,\dotsc,\sigma_m}(Z)\cap\hat\A_Z$ at a closed point $x$ is equal to the dimension of $Z$ at $\lambda(x)$.
\end{lemma}
\begin{proof}
The statement is local over $Z$, so we may assume that $Z=\spec A$ is affine and that $\cL$ is a trivial line bundle. Choosing a trivialization of $\cL$, we may view $\sigma_i$ as elements of $A$. Put $Z':=\Bl_{\sigma_0,\dotsc,\sigma_m}(Z)\cap\hat\A_Z$. By~\cite[Cor~4.2.17]{Liu2002AG} we only need to show that $Z'$ is regular at each closed point $x$. Let $x\in Z'$ lie over $z:=\lambda(x)\in Z=\spec A$. Viewing $z$ as a prime ideal of $A$, we may assume that $z\supset(\sigma_0,\dotsc,\sigma_m)$ (otherwise we are done by Lemma~\ref{lm:bl}).

\emph{Claim. Let $d\sigma_i$ be the image of $\sigma_i$ in the cotangent space $T^*_z$ of $Z$ at $z$. Then the vectors $d\sigma_0,\dotsc,d\sigma_m$ are linearly independent.}
\begin{proof}
    Consider the local rings $\cO_{Z,z}$ and $\cO_{Z_0,z}=\cO_{Z,z}/(\sigma_0,\dotsc,\sigma_m)$. Since $Z_0$ is of codimension $m+1$ in $Z$, we easily see that the height of the ideal $(\sigma_0,\dotsc,\sigma_m)$ in $\cO_{Z,z}$ is at least $m+1$. Thus
    \begin{equation}\label{eq:DimEstim}
        \dim\cO_{Z,z}\ge\dim\cO_{Z_0,z}+m+1.
    \end{equation}
    Next, the maximal ideal of $\cO_{Z_0,z}$ can be generated by $m_0:=\dim\cO_{Z_0,z}$ elements because $Z_0$ is regular. Let $\tau_1$, \ldots, $\tau_{m_0}$ be such generators. Then the maximal ideal of $\cO_{Z,z}$ is
    \[
       (\sigma_0,\dotsc,\sigma_m,\tilde\tau_1,\dotsc,\tilde\tau_{m_0})
\]
for any lift of $\tau_i$ to $\tilde\tau_i\in\cO_{Z,z}$. If $d\sigma_0,\dotsc,d\sigma_m$ were linearly dependent, we would have $\dim T_z^*<(m+1)+m_0$, which gives a contradiction with~\eqref{eq:DimEstim}.
\end{proof}

Recall that $\hat\A_Z=\spec A[y_1,\dotsc,y_m]$, where $y_i=x_i/x_0^{l_i}$. Thus we have sections $dy_i$ of the cotangent sheaf of $\hat\A_Z$.
Let $T^*_x$ be the cotangent space of $\hat\A_Z$ at $x$. Since $x$ is closed in $Z'$, it is also closed in the $z$-fiber of $Z'$. Thus we can identify $T^*_x=(T^*_z\otimes_{k(z)}k(x))\oplus V$, where $V$ is the $k(x)$-vector space with basis $dy_1$, \ldots, $dy_m$.

Let $T^*$ be the cotangent space of $Z'$ at $x$. We have the surjective projection $T^*_x\to T^*$; denote the kernel of this projection by $K$. Since $\sigma_i-y_i\sigma_0^{l_i}$ vanishes on $Z'$, and $d\sigma_i$ are linearly independent, the dimension of $K$ is at least $m$. Thus
\begin{equation}\label{eq:ineq1}
    \dim T^*\le\dim_z Z.
\end{equation}

We need a general statement about dimensions.

\emph{Claim. Let $f\colon S'\to S$ be a closed and surjective morphism of Noetherian schemes. Let $s\in S'$ be a schematic point. Then $\dim_sS'\ge\dim_{f(s)}S$.}
\begin{proof}
We will prove by induction on $n$ the following statement: \emph{let $f\colon S'\to S$ be a closed and surjective morphism of Noetherian schemes. Let $s\in S'$ be a schematic point. If $\dim_{f(s)}S\ge n$, then $\dim_sS'\ge n$.} If $n=0$, the statement is obvious. Assume that the statement is true for $n=k-1$, let us prove the statement for $n=k$.

Assume that $\dim_{f(s)}S\ge k$. We easily reduce to the case, when $S'$ is integral. We have a chain of prime ideals:
\[
    \cO_{f(s),S}\supsetneq I_{k+1}\supsetneq\dotsb\supsetneq I_1.
\]
The ideal $I_1$ corresponds to the generic point of an integral closed subscheme $S_1\subset S$ such that $f(s)\in S_1$. We have $\dim_{f(s)}S_1\ge k-1$. By induction hypothesis we have $\dim_sf^{-1}(S_1)\ge k-1$. But $f^{-1}(S_1)$ is a proper closed subset of $S'$ containing $s$ and $S'$ is irreducible, so $\dim_sS'\ge k$.
\end{proof}

If follows from this claim that the dimension of $Z'$ at $x$ is greater than or equal to the dimension of $Z$ at $z$ because the morphism $\lambda\colon \Bl_{\sigma_0,\dotsc,\sigma_m}(Z)\to Z$ is closed and surjective (being projective and dominant). Combining this inequality with~\eqref{eq:ineq1}, we see that~$Z'$ is regular at $x$. The statement about the dimensions is now also clear. The proof of Lemma~\ref{lm:BlReg} is complete.
\end{proof}

\subsection{Reductions of principal bundles to Borel subgroups}
Let $\bH$ be a reductive group scheme over a Noetherian scheme $T$. Let $\bB$ be a Borel subgroup scheme of $\bH$ (assumed to exist). Recall that a $\bB$-bundle $\cB$ induces an $\bH$-bundle $\bH\times^{\bB}\cB$. By a~\emph{$\bB$-reduction of an $\bH$-bundle $\cH$\/} we mean a pair $(\cB,s)$, where $\cB$ is a $\bB$-bundle, $s\colon \bH\times^{\bB}\cB\to\cH$ is an isomorphism. If such a reduction exists, we say that $\cH$ can be reduced to $\bB$.

\begin{proposition}\label{pr:codim2}
Let $\bH$ be a reductive group scheme over a Noetherian normal scheme $T$. Let $\bB$ be a Borel subgroup scheme of $\bH$ {\rm(}assumed to exist{\rm)}.
Assume that $\cH$ is an $\bH$-bundle. Then a~$\bB$-reduction of $\cH$ over a dense open subset of $T$ can be extended to an open subset whose complement has codimension at least two in $T$.
\end{proposition}
\begin{proof}
By~\cite[Exp.~XXVI, Cor.~3.6, Lm.~3.20]{SGA3-3} the quotient $\cH/\bB$ is represented by a projective scheme. It is easy to see from the \'etale descent that this quotient classifies $\bB$-reductions of $\cH$. Thus, we just need to show that a section of $\cH/\bB$ over a dense open subset of $T$ can be extended to  an open subset whose complement has codimension at least two in $T$. However, $\cH/\bB$ is proper over $T$, so the statement follows from~\cite[Cor.~7.3.5, Remarque~7.3.7]{EGAII}.
\end{proof}

\subsection{Bertini type Theorems}
Let us define the dimension of the empty scheme to be $-1$. The following proposition follows easily from results of~\cite{PoonenOnBertini}.
\begin{proposition}\label{pr:Poonen}
     Assume that $T_1$, \ldots, $T_n$ are non-empty locally closed subschemes of $\P_k^N$, where $k$ is a finite field. Let $T'\subset T$ be smooth locally closed subschemes of $\P_k^N$, and let~$F$ be a~finite set of closed points of $\P_k^N$. Assume that for all $i$ such that $T_i$ is finite, we have $T_i\cap F=\emptyset$. Then there is a~hypersurface $H\subset\P_k^N$ such that the scheme theoretic intersections $H\cap T$ and $H\cap T'$ are smooth, $F\subset H$, and for $i=1,\dotsc,n$ we have $\dim(H\cap T_i)<\dim T_i$.
\end{proposition}
We remark that if $T_i$ is a point, then the condition $\dim(H\cap T_i)<\dim T_i$ means that $H\cap T_i=\emptyset$.
\begin{proof}
Replacing each $T_i$ by the set of its top-dimensional irreducible components, we may assume that each $T_i$ is irreducible. For $i=1,\dotsc, n$ choose a closed point $p_i\in T_i-F$. We claim that there is a hypersurface $H$ such that
\begin{itemize}
    \item $H\cap T$ and $H\cap T'$ are smooth;
    \item $F\subset H$;
    \item For all $i$, we have $p_i\notin H$.
\end{itemize}
Then the statement follows from~\cite[Thm.~1.3]{PoonenOnBertini}. In more detail, for a point $p\in\P_k^N$, let $\hat\cO_p$ be the completion of the local ring $\cO_{\P_k^N,x}$. We apply~\cite[Thm.~1.3]{PoonenOnBertini} with the following local conditions. If $p=p_i$ for some $i$, then $U_p\subset\hat\cO_p$ is the condition that $H$ does not pass through $p$. If $p\in F$, then the conditions is that $H$ passes through $p$ and the intersections $H\cap T$ and $H\cap T'$ are smooth at $p$ (by definition, an intersection is smooth at $p$, if $p$ does not belong to this intersection). In the remaining case, the condition is just that $H\cap T$ and $H\cap T'$ are smooth at $p$ (cf. the proof of~\cite[Thm.~3.3]{PoonenOnBertini}). The hypersurface $H$ satisfies conditions of the proposition.
\end{proof}

The following proposition will be used to construct quasi-elementary fibrations.
\begin{proposition}\label{pr:GoodNeighb}
Let $k$ be a field and let $X\subset\P_k^{N_1}$ be a closed subscheme of pure dimension $n$, let $X^{sm}\subset X$ be an open subscheme smooth over $k$ and let $x\in X$ be a closed point. Let $n=\dim X$ and Let $T_1$ and $T_2$ be closed subsets of $X$ of dimensions at most $n-1$ and $n-2$ respectively such that $x\notin T_2$. For an integer $r$ consider the $r$-fold Veronese embedding $\P_k^{N_1}\hookrightarrow\P_k^{N_r}$ and identify $X$ with a closed subscheme of $\P_k^{N_r}$, using this embedding. Then there are a positive integer $r$ and sections $\sigma_0\in H^0(\P_k^{N_r},\cO(1))$, $\sigma_1\in H^0(\P_k^{N_r},\cO(l_1))$, \ldots, $\sigma_{n-1}\in H^0(\P_k^{N_r},\cO(l_{n-1}))$ for some positive integers $l_i$ such that

\stepzero\noindstep $\sigma_0(x)\ne0$.

\noindstep\label{GoodNeighb_Transversal} Let $\phi\colon\P^{N_r}_k\dashrightarrow\P_k(1,l_1,\dotsc,l_{n-1})$ be the rational morphism defined by the sections $\sigma_i$. Then the subscheme $\phi^{-1}(\phi(x))\cap X^{sm}$ is smooth of dimension one over $k(\phi(x))$.

\noindstep $\phi^{-1}(\phi(x))\cap T_1$ is finite.

\noindstep $\phi^{-1}(\phi(x))\cap T_2=\emptyset$.

\noindstep\label{GoodNeighb_Transversal2} $\{\sigma_0=\sigma_1=\dotsb=\sigma_{n-1}=0\}\cap X^{sm}$ is finite and \'etale over $k$.

\noindstep $\{\sigma_0=\sigma_1=\dotsb=\sigma_{n-1}=0\}\cap  T_i=\emptyset$ for $i=1,2$.
\end{proposition}

The proof of this proposition in the finite field case is significantly different from the proof in the infinite case.

\begin{proof}[Proof of Proposition~\ref{pr:GoodNeighb} in the case, when $k$ is finite.]
We inductively construct $\sigma_0$, \ldots, $\sigma_{n-1}$ such that $\sigma_0(x)\ne 0$, and for $m=1,\dotsc,n-1$ we have $\sigma_m(x)=0$,
\[
\begin{split}
    \dim(\{\sigma_0=\dotsb=\sigma_m=0\}\cap T_i)&<n-m-i,\\
    \dim(\{\sigma_1=\dotsb=\sigma_m=0\}\cap T_i)&\le n-m-i,
\end{split}
\]
and the intersections
\[
    \{\sigma_0=\dotsb=\sigma_m=0\}\cap X^{sm}\text{ and }\{\sigma_1=\dotsb=\sigma_m=0\}\cap X^{sm}
\]
are smooth over $k$ of dimensions $n-m-1$ and $n-m$ respectively.

For $m=0$ we apply Proposition~\ref{pr:Poonen} with $T_1$, $T_2$, $T_3=x$, $F=T'=\emptyset$, and $T=X^{sm}$. We get $\sigma_0\in H^0(\P_k^{N_1},\cO(r))$. We can view it as an element of $H^0(\P_k^{N_r},\cO(1))$. We have $\sigma_0(x)\ne0$, $\{\sigma_0=0\}\cap X^{sm}$ is smooth of dimension $n-1$ over $k$, and $\dim(\{\sigma_0=0\}\cap T_i)<n-i$.

Assume that $\sigma_0$, \ldots, $\sigma_{m-1}$ are already constructed. To construct $\sigma_m$ we apply Proposition~\ref{pr:Poonen} to
$\{\sigma_0=\dotsb=\sigma_{m-1}=0\}\cap T_i$, $T'=\{\sigma_0=\dotsb=\sigma_{m-1}=0\}\cap X^{sm}$, $T=\{\sigma_1=\dotsb=\sigma_{m-1}=0\}\cap X^{sm}$, and $F=\{x\}$.

By construction, $\sigma_0$, \ldots, $\sigma_{n-1}$ satisfy the conditions of the proposition. (Note that $\phi^{-1}(\phi(x))$ is contained in $\{\sigma_1=\ldots=\sigma_{n-1}=0\}$).
\end{proof}

\begin{proof}[Proof of Proposition~\ref{pr:GoodNeighb} in the case, when $k$ is infinite.]
We will take $r=2$ and $l_1=\dotsb=l_{n-1}=1$. Set $V:=H^0(\P_k^{N_2},\cO(1))$; we view the vector space $V$ as a scheme over $k$.

\begin{lemma}
Assume that $k$ is algebraically closed. Let $X$, $X^{sm}$, $x$, $T_1$, $T_2$ be as in the statement of Proposition~\ref{pr:GoodNeighb}. Then there is a non-empty open subset $W\subset V^n$ such that every point $(\sigma_0,\dotsc,\sigma_{n-1})\in W$ satisfies the conditions of the proposition. More precisely, conditions~\eqref{GoodNeighb_Transversal} and~\eqref{GoodNeighb_Transversal2} mean that $\phi^{-1}(\phi(x))$ and $\{\sigma_0=\dotsb=\sigma_{n-1}=0\}$ intersect $X^{sm}$ transversally.
\end{lemma}
\begin{proof}
Recall that we have a 2-fold Veronese embedding $\P_k^{N_1}\hookrightarrow\P_k^{N_2}$. Let $\Gr_x(N_2+1,n)$ stand for the Grassmannian of codimension $n-1$ subspaces containing $x$ in $\P^{N_2}_k$. It follows from~\cite[Exp.~XI, Thm.~2.1(ii)]{SGA4-3} that there is a non-empty open subset $U\subset\Gr_x(N_2+1,n)$ such that every subspace from $U$ intersects $X^{sm}$ transversally. Let $W'\subset V^n$ be the open subspace defined by the conditions that for all $(\sigma_0,\dotsc,\sigma_{n-1})\in W_i'$ we have $\sigma_0(x)\ne0$, and the rational morphism $\phi\colon\P^{N_2}_k\dashrightarrow\P_k^{n-1}$ defined by $\sigma_0,\dotsc,\sigma_{n-1}$ is such that the Zariski closure of $\phi^{-1}(\phi(x))$ is a codimension $n-1$ projective subspace of $\P_k^{N_2}$. We get a morphism $\pi:W'\to\Gr_x(N_2+1,n)$ sending $(\sigma_0,\dotsc,\sigma_{n-1})\in W'$ to the Zariski closure of $\phi(\phi^{-1}(x))$. Set $W'':=\pi^{-1}(U)$. Clearly, $W''$ is open. Since $\pi$ is dominant, $W''$ is non-empty. For each  $(\sigma_0,\dotsc,\sigma_{n-1})\in W''$ the intersection $\phi^{-1}(\phi(x))\cap X^{sm}$ is transversal. We show similarly that for every other condition of the proposition there is a non-empty open subset in $V^n$ whose points possess the property (use~\cite[Exp.~XI, Thm.~2.1(i)]{SGA4-3} for~\eqref{GoodNeighb_Transversal2}, use an inductive argument similar to the proof in the finite field case for the other properties). The intersection of these open subsets is non-empty, since $V^n$ is irreducible; it is the required set $W$.
\end{proof}

We return to the proof of Proposition~\ref{pr:GoodNeighb}. Let $\bar k$ be an algebraic closure of $k$. Consider the finite scheme $x\times_k\bar k$. Let $x_1,\dotsc,x_m\in X_{\bar k}$ be all its closed points. Applying the previous lemma to $X_{\bar k}$, $X^{sm}_{\bar k}$, $x_i$, $(T_1)_{\bar k}$, and $(T_2)_{\bar k}$, we get a dense open subset $\tilde W_i\subset\tilde V:=H^0(\P^{N_2}_{\bar k},\cO(1))$. Let $p\colon\tilde V\to V$ be the projection. There is a non-empty open subset $W\subset V$ such that $p^{-1}(W)\subset\cap_{i=1}^m\tilde W_i$. Since $k$ is infinite, we can find a $k$-rational point $(\sigma_0,\dotsc,\sigma_{n-1})\in W$. We claim that this point satisfies the conditions of the proposition.

Let us check condition~\eqref{GoodNeighb_Transversal} first. Set $s:=\phi(x)$, $k':=k(s)$, $Y:=\phi^{-1}(s)$. Then~$Y$ is smooth over $s$. We need a lemma whose meaning is that $Y$ and $X^{sm}$ intersect transversally.
\begin{lemma}
Let $Z$ be the scheme theoretic intersection of $X^{sm}$ and $Y$. Then the canonical morphism of sheaves
\[
    \alpha\colon\Omega_{\P^{N_2}_k/k}|_Z\to\Omega_{Y/s}|_Z\oplus\Omega_{X^{sm}/k}|_Z
\]
is injective and locally split.
\end{lemma}
\begin{proof}
Note that the sheaves $\Omega_{\P^{N_2}_k/k}|_Z$ and $\Omega_{Y/s}|_Z\oplus\Omega_{X^{sm}/k}|_Z$ are locally free. Thus, it is enough to check that for every geometric  point $\bar z\in Z(\bar k)$ the pullback of this morphism to $\bar z$ is injective. Let $\bar\phi\colon\P^{N_2}_{\bar k}\dashrightarrow\P_{\bar k}^{n-1}$ be the rational morphism defined by the sections $\bar\sigma_i:=\sigma_i\times_k\bar k$. Set $\bar x:=x\times_k\spec\bar k$, $\bar s:=s\times_k\spec\bar k$. Set $\overline Y_i:=\bar\phi^{-1}(\bar\phi(x_i))$ and $\overline X^{sm}:=X^{sm}\times_k\spec\bar k$.
Note that $\phi(\bar z)=\phi(x_i)$ for some $i$. The pullback of $\alpha$ to $\bar z$ is the canonical morphism
\[
    \bar\alpha\colon\Omega_{\P^{N_2}_{\bar k}/{\bar k}}|_{\bar z}\to\Omega_{(Y\times_k\spec\bar k)/\bar s}|_{\bar z}\oplus\Omega_{\overline X^{sm}/\bar k}|_{\bar z}.
\]
We also have a canonical isomorphism
\[
    \Omega_{(Y\times_k\spec\bar k)/\bar s}|_{\bar z}\oplus\Omega_{\overline X^{sm}/\bar k}|_{\bar z}\xrightarrow{\simeq}\Omega_{\overline Y_i/\bar k}|_{\bar z}\oplus\Omega_{\overline X^{sm}/\bar k}|_{\bar z}.
\]
Indeed, $\overline Y_i$ is the fiber of $Y\times_k\spec\bar k$ over $\phi(\bar z)=\phi(x_i)\in\bar s$. The composition of these two morphisms is injective because $\overline X^{sm}$ and $\overline Y_i$ intersect transversally. It follow that~$\bar\alpha$ is injective.
\end{proof}

If follows from the above lemma that the cokernel of $\alpha$ is a locally free sheaf of rank one. On the other hand, we have a surjective morphism $\Coker\alpha\twoheadrightarrow\Omega_{Z/k'}$. We will use the following lemma.
\begin{lemma}
Let $Z$ be a scheme of finite type over a field $k'$ such that all its components are of dimension at least one and the sheaf $\Omega_{Z/k'}$ is locally generated by one element. Then $Z$ is smooth over $k'$.
\end{lemma}
\begin{proof}
It is enough to prove the statement after a base change to an algebraic closure of $k'$. Thus, we may assume that $k'$ is algebraically closed. In this case, for every closed point of $Z$ we have $(\Omega_{Z/k'})_z=\fm_z/\fm_z^2$, where $\fm_z$ is the maximal ideal of the local ring $\cO_{Z,z}$. Since this vector space is generated by one element, we see that $Z$ is regular at $z$. Thus $Z$ is regular. Since $k'$ is algebraically closed, we see that $Z$ is smooth over~$k'$.
\end{proof}
Now condition~\eqref{GoodNeighb_Transversal} follows from the above lemma. Condition~\eqref{GoodNeighb_Transversal2} is verified similarly. The remaining conditions are clear.
\end{proof}

\subsection{Constructing quasi-elementary fibrations} In this section we prove the following proposition.

\begin{proposition}\label{pr:ConstrQEF}
Let $X\to\spec\Lambda$ and $x\in X$ be as in Theorem~\ref{th:GrSerre}. That is, $\Lambda$ is an excellent discrete valuation ring, $b\in\spec\Lambda$ is the closed point. Also, $X$ is an integral scheme, $X$ is flat and projective over $\spec\Lambda$, and $X$ satisfies conditions~(I) and~(II) of Section~\ref{sect:MainRes}. The projection $X\to\spec\Lambda$ is smooth at the closed point $x\in X$. We also assume that the relative dimension of the flat morphism $X\to\spec\Lambda$ is at least one. Let $X^0$ be an open subscheme of $X$ such that $x\in X^0$. Assume also that the intersection of $X^0$ with the fiber $X_b$ is dense in this fiber. Assume that $Z$ is a closed subset of $X^0$ of codimension at least two. Then there is an open subscheme $X'\subset X^0$ containing $x$, a connected $\Lambda$-scheme $S$ smooth over $\Lambda$, and a $\Lambda$-morphism $p\colon X'\to S$ such that $p$ is a quasi-elementary fibration and $Z\cap X'$ is finite over $S$.
\end{proposition}
\begin{proof}
The proof is somewhat technical but it follows the same strategy as the proofs of~\cite[Prop.~2.3]{PaninStavrovaVavilov} and of Artin's result~\cite[Exp.~XI, Prop.~3.3]{SGA4-3}.

We may assume that $X^0$ is smooth over $\spec\Lambda$ (use condition~(I) and openness of smoothness). Set $Y^0:=X-X^0$. Set $n=\dim X-1=\dim X_b$. Note that $\dim Y^0_b\le n-1$. Denote by $\overline Z$ the Zariski closure of $Z$ in $X$. Then $(\overline Z)_b$ is the intersection of $\overline Z$ with $X_b$, which is in general larger than the closure of $Z_b$. In any case,
\[
    \dim(\overline Z)_b\le\dim\overline Z\le n-1.
\]

\begin{lemma}\label{lm:Bertini}
    There is a $\Lambda$-embedding $X\hookrightarrow\P^N_\Lambda$ for some $N$, a section $\sigma_0\in H^0(\P^N_{k(b)},\cO(1))$, and sections $\sigma_i\in H^0(\P^N_{k(b)},\cO(l_i))$ for some positive integers $l_i$, satisfying the following conditions

    \begin{itemize}
    \item $\sigma_0(x)\ne0$;
    \item $(X^{sing})_b\cap\phi^{-1}(\phi(x))=\emptyset$, where $\phi\colon\P^N_k\dashrightarrow\P_k(1,l_1,\dotsc,l_{n-1})$ is the rational morphism defined by the sections $\sigma_i$;
    \item $Y^0_b\cap\phi^{-1}(\phi(x))$ is finite;
    \item $(\overline Z)_b\cap\phi^{-1}(\phi(x))$ is finite;
    \item $(\overline Z-Z)_b\cap\phi^{-1}(\phi(x))=\emptyset$;
    \item $(\overline Z)_b\cap\{\sigma_0=\sigma_1=\dotsb=\sigma_{n-1}=0\}=\emptyset$;
    \item $X^0_b\cap\phi^{-1}(\phi(x))$ is smooth of dimension one over $b$;
    \item $(X^{sing})_b\cap\{\sigma_0=\sigma_1=\dotsb=\sigma_{n-1}=0\}=\emptyset$.
    \item $X_b\cap\{\sigma_0=\sigma_1=\dotsb=\sigma_{n-1}=0\}$ is finite and \'etale over $b$;
    \end{itemize}
\end{lemma}

\begin{proof}
Note that
\begin{itemize}
\item $\dim (X^{sing})_b\le n-2$ by condition (II) on $X$;
\item $x\notin X^{sing}$ by assumption;
\item $\dim(\overline Z-Z)_b\le\dim(\overline Z-Z)\le n-2$;
\item $x\notin\overline Z-Z$ because $Z$ is closed in $X_b^0$ and $x\in X_b^0$.
\end{itemize}

Consider any $\Lambda$-embedding $X\hookrightarrow\P^{N_1}_\Lambda$ for an integer $N_1$. We apply Proposition~\ref{pr:GoodNeighb} with $X_b$, $X_b^0$, $x$, $T_1=Y_b^0\cup(\overline Z)_b$ and $T_2=(\overline Z-Z)_b\cup(X^{sing})_b$.
We claim that the composition of $X\hookrightarrow\P^{N_1}_\Lambda$ with the $r$-fold Veronese embedding satisfies the requirements. In fact, all conditions except the last one are immediate. Since $\{\sigma_0=\sigma_1=\dotsc=\sigma_{n-1}=0\}\cap Y_b^0=\emptyset$, we get the last condition.
\end{proof}

We can lift each $\sigma_i$ to a section $\tilde\sigma_i\in H^0(\P_\Lambda^N,\cO(l_i))$, because the reduction map from the $\Lambda$-module $H^0(\P_\Lambda^N,\cO(l_i))$ to the $k(b)$-vector space $H^0(\P_{k(b)}^N,\cO(l_i))$ is surjective. Similarly, we can lift $\sigma_0$ to a section $\tilde\sigma_0\in H^0(\P_\Lambda^N,\cO(1))$. Set $\cL:=\cO_{\P_\Lambda^N}(1)|_X$, $\sigma'_i:=\tilde\sigma_i|_X$, so that
$\sigma'_i\in H^0(X,\cL^{\otimes l_i})$. Set $X_{\Bl}:=\Bl_{\sigma'_0,\dotsc,\sigma'_{n-1}}(X)$ (see Section~\ref{sect:bl}).

Denote by $\tilde\phi$ the rational morphism $\P_\Lambda^N\dashrightarrow\P_\Lambda(1,l_1,\dotsc,l_{n-1})$ defined by $\tilde\sigma_0$, \ldots, $\tilde\sigma_{n-1}$. Let $\lambda\colon X_{\Bl}\to X$ be the canonical morphism. Denote by $E$ the exceptional locus of $\lambda$, that is, $E=\lambda^{-1}(X\cap \{\tilde\sigma_0=\dotsb=\tilde\sigma_{n-1}=0\})$. By Lemma~\ref{lm:bl}, $\lambda$ induces an isomorphism $X_{\Bl}-E=X-\{\tilde\sigma_0=\dotsb=\tilde\sigma_{n-1}=0\})$. Set $\hat Z:=\lambda^{-1}(\overline Z)$ and $\hat Y:=\lambda^{-1}(Y^0)$. We identify $x$ with its unique $\lambda$-preimage in $X_{\Bl}$, see Lemma~\ref{lm:bl}.

We have a projective morphism $\bar p\colon X_{\Bl}\to S:=\P_\Lambda(1,l_1,\dotsc,l_{n-1})$, defined as the composition of the closed embedding $X_{\Bl}\to\P_X(1,l_1,\dotsc,l_{n-1})$ and the projection $\P_X(1,l_1,\dotsc,l_{n-1})\to\P_\Lambda(1,l_1,\dotsc,l_{n-1})$. Set $s:=\bar p(x)=\tilde\phi(x)$ and $F:=\bar p^{-1}(s)$.

\begin{lemma}\label{lm:Step3}
Denote by $X_{\Bl}^s$ the set of points of $X_{\Bl}$, where $\bar p$ is not smooth. Then
\stepzero
\noindstep\label{F0} $F$ is the Zariski closure of $\lambda^{-1}(\tilde\phi^{-1}(s)\cap X^0)$;\\
\noindstep\label{F1} $F$ is of pure dimension one;\\
\noindstep\label{F2} $X_{\Bl}$ is regular at the points of $F$;\\
\noindstep\label{F3} $\bar p$ is flat at the points of $F$;\\
\noindstep\label{F4} $X_{\Bl}^s\cap F$ is finite;\\
\noindstep\label{F5} $E\cap F$, $\hat Y\cap F$, and $\hat Z\cap F$ are finite;\\
\noindstep\label{F6} $\hat Z\cap F=\lambda^{-1}(Z)\cap F$;\\
\noindstep\label{F7} $\hat Z\cap E\cap F=\hat Z\cap\hat Y\cap F=\hat Z\cap X_{\Bl}^s\cap F=\emptyset$.
\end{lemma}
\begin{proof}
According to Lemma~\ref{lm:Bertini}, $\{\sigma_0=\dots=\sigma_{n-1}=0\}$ is finite over $b$. It follows that $\bar p|_E$ is a finite morphism. Using this fact and the fact that $\lambda^{-1}(\tilde\phi^{-1}(s)\cap Y^0)$ is finite, we see that $F$ is a union of $\lambda^{-1}(\tilde\phi^{-1}(s)\cap X^0)$ and a finite set. However, since $X_{\Bl}$ is irreducible of dimension $n+1$ all the components of $F$ have dimension at least one. Now~\eqref{F0} follows and~\eqref{F1} follows from~\eqref{F0}.

Next, we have a \emph{regular\/} open subscheme $X-X^{sing}\subset X$. Set
\[
    \tilde L:=\{\tilde\sigma_0=\dotsb=\tilde\sigma_{n-1}=0\}.
\]
We claim that $X\cap\tilde L$ is contained in $X-X^{sing}$. Indeed, the intersection $X^{sing}\cap\tilde L$ is proper over $\Lambda$, so, if it is nonempty, it must intersect the closed fiber, which contradicts the penultimate statement in Lemma~\ref{lm:Bertini}.

Further, we claim that $X\cap\tilde L$ is a locally complete intersection in $X-X^{sing}$. Indeed, the integral scheme $X-X^{sing}$ is of dimension $n+1$ because the closed fiber $X_b-X_b^{sing}$ is of dimension $n$, and $X-X^{sing}$ is flat over $\Lambda$. Further, $X\cap\tilde L$ is locally given by $n$ equations. So to show that it is a locally complete intersection we just need to show that every component of $X\cap\tilde L$ has dimension at most one. Again, the morphism $X\cap\tilde L\to\spec\Lambda$ is proper, so it is enough to show that the central fiber $X_b\cap\tilde L$ is finite, which is a part of Lemma~\ref{lm:Bertini}. We see that $X\cap\tilde L$ is a locally complete intersection in the regular scheme $X-X^{sing}$. Now by~\cite[Thm.~23.1]{MatsumuraCommRingTh} $X\cap\tilde L$ is flat over $\spec\Lambda$.
It follows from Lemma~\ref{lm:Bertini} and openness of \'etalness for flat morphisms that $X\cap\tilde L$ is \'etale over $\spec\Lambda$.
Since
\[
    \lambda(F)\subset\tilde\phi^{-1}(s)\cup(X_b\cap\tilde L)\subset X-X^{sing},
\]
part~\eqref{F2} follows from Lemma~\ref{lm:BlReg}.

Next, let $x$ be a closed point of $F$. Then $\lambda(x)$ is a closed point of $X$ because $\lambda$ is proper. Applying Lemma~\ref{lm:BlReg}, we see that we have
\[
    \dim_x X_{\Bl}=\dim_{\lambda(x)}X=n+1=\dim_{\bar p(x)} S+\dim_x F,
\]
where we used part~\eqref{F1}. Thus $\bar p$ is flat at $x$ by~\cite[Thm.~23.1]{MatsumuraCommRingTh} and part~\eqref{F2}. Now~\eqref{F3} follows because the set of points, where $\bar p$ is flat is open, and closed points are dense in $F$ because $F$ is a scheme of finite type over a field.

To prove~\eqref{F4} note that $\bar p$, being flat on $F$, is smooth exactly where the fiber is smooth. Now use part~\eqref{F0} and Lemma~\ref{lm:Bertini}. The remaining statements follow from~\eqref{F0} and the respective properties of $L$ and~$H_0$ (see Lemma~\ref{lm:Bertini}).
\end{proof}

\begin{lemma}\label{lm:Step4}
After shrinking $(S,1_b)$ in the sense of Convention~\ref{conv:shrinking} and replacing $X_{\Bl}$, $E$, $\hat Y$, and $\hat Z$ by their intersections with $\bar p^{-1}(S)$, we may assume that\\
\stepzero
    \noindstep\label{G1} $S$ is connected, affine, and smooth over $\Lambda$;\\
    \noindstep\label{G2} $X_{\Bl}$ is regular;\\
    \noindstep\label{G3} $\bar p$ is flat of pure relative dimension one;\\
    \noindstep\label{G4} $X_{\Bl}^s$, $E$, $\hat Y$, and $\hat Z$ are finite over $S$;\\
    \noindstep\label{G5} There is a closed subset $Y\subset X_{\Bl}$ finite and surjective over $S$ such that $Y\supset E\cup X_{\Bl}^s\cup\hat Y$, $Y\cap\hat Z=\emptyset$, $x\notin Y$, and $X_{\Bl}-Y$ is affine.
\end{lemma}
\begin{proof}
First of all,~\eqref{G1} is obvious,~\eqref{G2} follows from the fact that the set of points, where $X_{\Bl}$ is regular is open in $X_{\Bl}$ (because $\Lambda$ is excellent) and the fact that $\bar p$ is closed. Next, flatness in~\eqref{G3} follows from Lemma~\ref{lm:Step3}\eqref{F3} together with the fact that set of points, where a morphism of finite type is flat, is open.

Further, it follows from the construction that $X_{\Bl}$ and $S$ are irreducible. Since $\bar p$ is flat, it is open, hence we can apply~\cite[Cor.~14.2.2.(i)]{EGAIV-3} to conclude that $\bar p$ is equidimensional. The set $F=\bar p^{-1}(1_b)$ is of pure dimension one by Lemma~\ref{lm:Step3}\eqref{F1}. We see that $\bar p$ is of pure relative dimension one.

Next,~\eqref{G4} follows because the dimensions of fibers of a~projective morphism are upper semicontinuous (see~\cite[Cor.~13.1.5]{EGAIV-3}) and a quasi-finite projective morphism is finite; finally,~\eqref{G5} follows from Lemma~\ref{lm:finitedivisor}\eqref{FD2} (note that $Y$ is automatically surjective over $S$ because the fibers of $X_{\Bl}$ are projective, while the fibers of $X_{\Bl}-Y$ are affine).
\end{proof}

Let us summarize. Just before Lemma~\ref{lm:Step4} we constructed a projective morphism $\bar p\colon X_{\Bl}\to S$, a morphism $\lambda\colon X_{\Bl}\to X$ and a subscheme $\hat Y\subset X_{\Bl}$. Then in Lemma~\ref{lm:Step4} we replaced $S$, $X_{\Bl}$, and $\hat Y$ by open subschemes following Convention~\ref{conv:shrinking}. We also constructed a closed subset $Y\subset X_{\Bl}$. The restriction of $\lambda$ to $X':=X_{\Bl}-Y$ is an open embedding, so we can identify $X'$ with an open subset of $X^0$. Now it follows from the construction and Lemma~\ref{lm:Step4} that $\bar p|_{X'}\colon X'\to S$ is a quasi-elementary fibration (with $\overline X=X_{\Bl}$). Also, shrinking $(S,1_b)$ again if necessary, we may assume that under the identification of $X'$ and $\lambda(X')$ we have $\hat Z=Z\cap X'$, so $Z\cap X'$ is finite over $S$ (use Lemma~\ref{lm:Step4}\eqref{F6}). This completes the proof of Proposition~\ref{pr:ConstrQEF}.
\end{proof}

\begin{proposition}\label{pr:QElFib}
Let $\Lambda$, $X$, $x$ and $\bG_{X,x}$ be as in Theorem~\ref{th:GrSerre} and let $\bG$ be a split $\Lambda$-group scheme such that $\bG_{X,x}\simeq\bG\times_\Lambda\spec\cO_{X,x}$. Assume also that the relative dimension of $X\to\spec\Lambda$ is at least one.  Let $\cG$ be a $\bG_{X,x}$-bundle having a rational section. Then there are
\begin{itemize}
\item an open affine subscheme $X'\subset X$ containing $x$;
\item a quasi-elementary fibration $p\colon X'\to S$ with $S$ connected and smooth over~$\Lambda$;
\item a principal divisor $Z'\subset X'$ finite over $S$;
\item a $\bG$-bundle $\cF$ over $X'$ extending $\cG$ such that $\cF$ is trivial over $X'-Z'$;
\item a finite surjective $S$-morphism $X'\to\A^1_S$.
\end{itemize}
\end{proposition}

\subsection{Proof of Proposition~\ref{pr:QElFib}} We will use the notations and the assumptions from the statement of the proposition. We start with a lemma.

\begin{lemma}
We can find a regular open subscheme $X^0\subset X$ such that $x\in X^0$, $X^0\cap X_b$ is dense in $X_b$, and $\cG$ can be extended to a $\bG$-bundle $\cG^0$ over $X^0$ that is trivial over a dense open subset of $X^0$.
\end{lemma}
\begin{proof}
We can find an open subscheme $X_1\subset X$ such that $x\in X_1$, and $\cG$ can be extended to a $\bG$-bundle $\cG_1$ over $X_1$. Since $\cG$ is generically trivial, $\cG_1$ is trivial on the complement of a proper closed subscheme $Z_1\subset X_1$. Since $X_b$ is smooth at $x$, we see that $x$ lies on a single irreducible component of $X_b$. Thus we may assume that $X_1$ does not intersect irreducible components of $X_b$ other than that containing~$x$.

Denote by $n$ the dimension of $X_b$. It follows from the flatness of the morphism $\pi\colon X\to\spec\Lambda$ that the Krull dimension of $X$ is $n+1$. Let $\overline Z_1$ be the Zariski closure of $Z_1$ in $X$. We have
\[
\dim(\overline Z_1-Z_1)<\dim Z_1\le n.
\]
It follows that $\overline Z_1-Z_1$ cannot contain an irreducible component of $X_b$ (use flatness of $\pi\colon X\to\spec\Lambda$ again). Thus $\overline Z_1$ cannot contain irreducible components of $X_b$ other than the component containing~$x$. Consider the trivial $\bG$-bundle $\cG_{triv}$ over $X-\overline Z_1$. The trivialization of $\cG_1$ is an isomorphism between $\cG_1$ and $\cG_{triv}$ over the open subset $X_1-Z_1$. Thus we can glue $\cG_1$ with $\cG_{triv}$ over $X_1-Z_1$ to make a $\bG$-bundle $\cG_2$ over $X_2:=(X-\overline Z_1)\cup X_1$. One now takes $X^0$ to be the regular locus of $X_2$ and sets $\cG^0:=\cG_2|_{X^0}$. It follows from the construction and property (II) of $\pi\colon X\to\spec\Lambda$, that $X^0$ satisfies the requirements of the lemma.
\end{proof}

Fix such $X^0$ and $\cG^0$ provided by the above lemma. Since $\bG$ is split, there is a split maximal torus $\bT\subset\bG$ and a Borel subgroup $\bB\subset\bG$ containing $\bT$. Fix such $\bT\subset\bB$. The trivialization of $\cG^0$ over a dense open subset of $X^0$ gives a~$\bB$-reduction of~$\cG^0$ over this subset. Thus, according to Proposition~\ref{pr:codim2}, $\cG^0$ can be reduced to $\bB$ over $X^0-Z$, where $Z$ is closed and of codimension at least two in $X^0$. By Proposition~\ref{pr:ConstrQEF}, there is an open subscheme $X'\subset X^0$ containing $x$, and a~quasi-elementary fibration $p\colon X'\to S$ with $S$ connected and smooth over $\spec\Lambda$ such that $Z\cap X'$ is finite over~$S$. We may assume that $S$ is affine. We will use the notations from Definition~\ref{def:QEF}. In particular, we have a flat projective morphism $\bar p\colon \overline X\to S$. Set $s:=\bar p(x)$ and $F:=\bar p^{-1}(s)$.

Note that $Z\cap X'$ is closed in $\overline X$ (being finite over $S$), so applying Lemma~\ref{lm:finitedivisor}\eqref{FD2} to $Z\cap X',Y\subset\overline X$, we find a closed subscheme $Z_1\subset X'$ such that $Z\cap X'\subset Z_1$, $Z_1$ is finite over $S$, and $\overline X-Z_1$ is an affine scheme (we might need to shrink $(S,s)$). Then $X'-Z_1=(\overline X-Z_1)\cap X'$ is also affine as the intersection of two open affine subschemes of a separated scheme.

Set $\cF:=\cG^0|_{X'}$. Note that $\cF$ is reduced to the Borel subgroup $\bB$ over $X'-Z_1$, that is, there is a $\bB$-bundle $\cB$ over $X'-Z_1$ such that the $\bG$-bundles $\bG\times^{\bB}\cB$ and $\cF|_{X'-Z_1}$ are isomorphic. Let $\bU$ be the unipotent radical of $\bB$, then the quotient $\bB/\bU$ is isomorphic to the split torus $\bT$. Let $\cB/\bU$ be the induced $\bT$-bundle (this quotient is representable by a scheme because of the \'etale descent, see~\cite[VIII, Cor.~7.9]{SGA1} for a stronger statement). We claim that (after shrinking $(S,s)$ again) we can find a closed subset $Z_2\subset X'-Z_1$ such that $Z_2$ is finite over $S$, the bundle $\cB/\bU$ is trivial over $X'-Z_1-Z_2$, and $X'-Z_1-Z_2$ is affine. Since a principal bundle for a split torus corresponds to a collection of line bundles, and the intersection of open affine subschemes of a separated scheme is affine, this follows from the next lemma.

\begin{lemma}
Let $\ell$ be a line bundle over $X'':=X'-Z_1$. Then {\rm(}after shrinking $(S,s)${\rm)} there is a subscheme $Z''\subset X''$ finite over $S$ such that $\ell$ is trivial over $X''-Z''$ and $X''-Z''$ is affine.
\end{lemma}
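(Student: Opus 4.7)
The plan is to extend $\ell$ from $X''$ to a line bundle $\bar\ell$ on the compactification $\bar X$ and then produce a rational section of $\bar\ell$ whose divisor of zeros and poles, restricted to $X''$, is finite over $S$ after a suitable shrinking of $(S,s)$.

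\emph{Extension step.} Since $\bar X$ is regular by Definition~\ref{def:QEF}(2), it is locally factorial. Therefore the Zariski closure in $\bar X$ of any Weil divisor on $X''=\bar X-(Y\cup Z_1)$ representing $\ell$ is Cartier, and defines a line bundle $\bar\ell$ on $\bar X$ with $\bar\ell|_{X''}\simeq\ell$. After shrinking $S$ to an affine scheme, the projective morphism $\bar p:\bar X\to S$ admits a relatively very ample line bundle $\cA$; moreover, for every integer $M$ sufficiently large, the line bundle $\bar\ell\otimes\cA^{\otimes M}$ is also relatively ample.

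\emph{Producing sections.} Let $F:=\bar p^{-1}(s)$ and choose one closed point $p_j$ in each one-dimensional irreducible component of $F$; the reduced finite subset $T_2:=\{p_j\}\subset\bar X$ is finite over $S$, being supported over $s$. The relative Serre vanishing argument in the proof of Lemma~\ref{lm:finitedivisor}\eqref{FD1}, applied with $\cL=\cA$, produces, after shrinking $(S,s)$ and for some large $N$, a section $\tau\in H^0(\bar X,\cA^{\otimes N})$ not vanishing at any $p_j$. The same Serre vanishing argument (equivalently, the vanishing $R^1\bar p_*(\bar\ell\otimes\cA^{\otimes N}(-T_2))=0$ for $N$ large) produces a section $\sigma\in H^0(\bar X,\bar\ell\otimes\cA^{\otimes N})$ not vanishing at any $p_j$, where the same $N$ may be used in both constructions.

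\emph{Concluding.} The rational section $\sigma/\tau$ of $\bar\ell$ has divisor $\mathrm{div}(\sigma)-\mathrm{div}(\tau)$. By construction, neither $\mathrm{div}(\sigma)\cap F$ nor $\mathrm{div}(\tau)\cap F$ contains a one-dimensional component of $F$, so both intersections are zero-dimensional. Upper semicontinuity of fiber dimension, combined with the fact that a quasi-finite projective morphism is finite, makes $\mathrm{div}(\sigma)\to S$ and $\mathrm{div}(\tau)\to S$ finite after a further shrinking of $(S,s)$. Setting $Z'':=(\mathrm{div}(\sigma)\cup\mathrm{div}(\tau))\cap X''$, which is closed in $X''$ and finite over $S$, the section $\sigma/\tau$ trivializes $\bar\ell$ on $\bar X-(\mathrm{div}(\sigma)\cup\mathrm{div}(\tau))$, hence trivializes $\ell$ on $X''-Z''$. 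The only real subtlety is that the ratio must land in $\bar\ell$ itself rather than in some higher tensor power, which is why we must produce sections of $\bar\ell\otimes\cA^{\otimes N}$ and of $\cA^{\otimes N}$ with the same twist $N$; this is a direct variant of Lemma~\ref{lm:finitedivisor}\eqref{FD1}, not a serious obstacle.
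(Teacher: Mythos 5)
Your approach is genuinely different from the paper's. You produce a trivializing rational section of the extension $\bar\ell$ explicitly via relative Serre vanishing (reusing the mechanism of Lemma~\ref{lm:finitedivisor}\eqref{FD1}), whereas the paper observes that $\ell$ is trivial over the regular \emph{semilocal} ring of a finite set of points $X_\infty\subset F$ (Picard group of a semilocal regular ring is trivial), and then spreads this trivialization out. Both routes are legitimate and of comparable length.

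However, your final step has a genuine gap. You set $Z'':=\bigl(\mathrm{div}(\sigma)\cup\mathrm{div}(\tau)\bigr)\cap X''$ and assert without justification that $Z''$ is finite over $S$. After shrinking $(S,s)$, the closed scheme $W:=\mathrm{div}(\sigma)\cup\mathrm{div}(\tau)\subset\bar X$ is indeed finite over $S$, but $Z''=W\cap X''$ is only an \emph{open} subscheme of $W$ (you are removing $W\cap(Y\cup Z_1)$). An open subscheme of a finite $S$-scheme is quasi-finite but generally not proper over $S$; it is finite only if it is also closed in $W$. Your $T_2=\{p_j\}$ keeps $W\cap F$ zero-dimensional but places no constraint forcing $W$ to avoid the points of $(\bar X-X'')\cap F=(Y\cup Z_1)\cap F$, so $W$ may well pass through $Y\cup Z_1$ and $Z''$ need not be finite.

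The fix is exactly the ingredient that drives the paper's argument: enlarge $T_2$ to include the finite set $(\bar X-X'')\cap F$ in addition to one closed point per component of $F$. Then $\sigma$ and $\tau$ are nonvanishing on $(Y\cup Z_1)\cap F$, so $W\cap(Y\cup Z_1)$ has empty closed fiber; since $W\cap(Y\cup Z_1)$ is closed in $W$ and hence finite over $S$, a further shrinking of $(S,s)$ makes it empty, i.e.\ $W\subset X''$, and then $Z''=W$ is closed in $\bar X$ and finite over $S$ as required.
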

\begin{proof}
First of all, we may extend $\ell$ to $\overline X$ because $\overline X$ is a regular scheme. Set $X_\infty:=(\overline X-X'')\cap F$, this is a finite scheme. Adding finitely many points to $X_\infty$, we may assume that it intersects each irreducible component of $F$. Since $\overline X$ is projective over an affine scheme, $X_\infty$ is contained in an open affine subscheme of $\overline X$. Thus we can consider the semilocal ring of $X_\infty$ in $\overline X$; denote it by $A$. Since $A$ is semilocal, $\ell$ is trivial over $A$. Thus there is a closed subscheme $Z''\subset\overline X$ such that $\ell|_{\overline X-Z''}$ is trivial and $Z''\cap X_\infty=\emptyset$. In particular, $Z''\cap F$ is finite by our choice of $X_\infty$. Shrinking $(S,s)$, we may assume that $Z''$ is finite over $S$ and that $Z''\subset X''$. Now by Lemma~\ref{lm:finitedivisor}\eqref{FD2} we may assume that $\overline X-Z''$ (and thus $X''-Z''$) are affine.
\end{proof}

Now we finish the proof of the proposition. Choose $Z_2\subset X'-Z_1$ such that $\cB/\bU$ is trivial over $X'-Z_1-Z_2$, $X'-Z_1-Z_2$ is affine, and $Z_2$ is finite over $S$. By~\cite[Exp.~XXVI, Cor.~2.3]{SGA3-3} we see that $\cB$ and thus $\cF$ are trivial over $X'-Z_1-Z_2$.

Note that $Z_1\cup Z_2$ is closed in $\overline X$. By Lemma~\ref{lm:finitedivisor}\eqref{FD3}, by shrinking $(S,s)$ we can find a~finite surjective morphism $\Pi\colon \overline X\to\P_S^1$ such that
\[
    Z_1\cup Z_2\cup\{x\}\subset Z':=\Pi^{-1}(0\times S),\quad Y\subset Y':=\Pi^{-1}(\infty\times S).
\]
Clearly, $X''':=\overline X-Y'$ is smooth and affine over $S$. Also,~$Z'$ is finite over $S$. It is easy to check that the restriction of $p$ to $X'''$ is a quasi-elementary fibration. Next,~$Z'$ is a principal divisor in $X'''$ because $0\times S$ is a principal divisor in $\A^1_S$. Clearly, $\cF$ is trivial over $X'''-Z'$. This completes the proof of Proposition~\ref{pr:QElFib}.\qed

\section{End of Proof of Theorem~\ref{th:GrSerre}}\label{sect:EndOfProof}
In this section, we use the notion of a nice triple to reduce Theorem~\ref{th:GrSerre} to Theorem~\ref{th:A1}. We keep the notations and the assumptions from Theorem~\ref{th:GrSerre}. As before, $U:=\spec\cO_{X,x}$ and $\bG$ is a $\Lambda$-group scheme such that $\bG_{X,x}=\bG\times_{\cO_{X,x}}\spec\Lambda$. Let $\cG$ be a generically trivial $\bG$-bundle over $U$. We need to show that $\cG$ is trivial. By~\cite{Nisnevich1} we may assume that the relative dimension of the flat morphism $X\to\spec\Lambda$ is at least one (though, in fact, it is easy to prove the theorem if this dimension is zero, see Remark~\ref{rm:NoNisnevich}).

\subsection{Nice triples}
Recall the notion of a nice triple from~\cite[Def.~3.1]{PaninStavrovaVavilov}.

\begin{definition}\label{def:nice}
A~\emph{nice triple} over $U$ is a triple $(q_U\colon \cX\to U,f,\Delta)$, where $\cX$ is an irreducible affine scheme smooth over $U$ and such that all its fibers are of pure dimension one, $f\in\Gamma(\cX,\cO_{\cX})$ is such that its zero locus $\cZ$ is finite over $U$, and $\Delta\colon U\to\cX$ is a~section of $q_U$ such that $\Delta^*(f)\ne0$. These data are subject to the condition that there exists a finite $U$-morphism $\cX\to\A^1_U$.
\end{definition}

\begin{remark}
The finiteness of $\cZ$ is equivalent to the condition that
\[
    \Gamma(\cX,\cO_\cX)/f\cdot\Gamma(\cX,\cO_\cX)
\]
be finite as a $\Gamma(U,\cO_U)$-module.
\end{remark}

\begin{proposition}\label{pr:nicetriples}
Assumptions being as in Theorem~\ref{th:GrSerre}, let $U:=\spec\cO_{X,x}$ and let~$\cG$ be a principal $\bG$-bundle over $U$ having a rational section. Then there are a nice triple $(q_U\colon \cX\to U,f,\Delta)$ and a $\bG$-bundle $\cE$ over $\cX$ such that\\
\stepzero\noindstep $\Delta^*\cE\simeq\cG$;\\
\noindstep $\cE$ is trivial over the complement of the zero locus $\cZ$ of $f$.

Moreover, if the field $k(x)$ is finite, then we may choose this nice triple so that\\
\noindstep\label{cond3} There is at most one point $z\in\cZ_x$ rational over $k(x)$;\\
\noindstep\label{cond4} For any integer $r\ge 1$ one has
    \[
        \#\{z\in \cZ_x|\;[k(z):k(x)]=r\}\le\#\{z\in\A^1_x|\;[k(z):k(x)]=r\}.
    \]
\end{proposition}
The proof, given below, is similar to~\cite[Thm.~4.3]{PaninNiceTriples},~\cite[Prop~6.1]{PaninStavrovaVavilov}, and~\cite[Sect.~3--4]{PaninMovingLemma}.

\begin{proof}
By Proposition~\ref{pr:QElFib} there are
\begin{itemize}
\item an open affine subscheme $X'\subset X$ containing $x$;
\item a quasi-elementary fibration $p\colon X'\to S$ with $S$ connected and smooth over~$\Lambda$;
\item a principal divisor $Z'\subset X'$ finite over $S$;
\item a $\bG$-bundle $\cF$ over $X'$ extending $\cG$ and such that $\cF$ is trivial over $X'-Z'$;
\item a finite surjective $S$-morphism $X'\to\A^1_S$.
\end{itemize}

Put $\cX':=X'\times_S U$, let $q'_U\colon \cX'\to U$ be the projection. Let $g\in H^0(X',\cO_{X'})$ be an equation of $Z'$, set $f'=p_1^*(g)\in H^0(\cX',\cO_{\cX'})$. Let $\Delta$ be the composition
\[
    U\xrightarrow{\mathrm{diag}}U\times_SU\xrightarrow{\mathrm{can}\times\Id_U}X'\times_S U=\cX'.
\]
Let $\cX$ be the connected component of $\cX'$ containing $\Delta(U)$. Then $\cX$ is irreducible because it is regular and connected. Since $p\colon X'\to S$ is flat (even smooth) of relative dimension one, $q'_U$ is also so, and we see that every component of each fiber is
one-dimensional. Next, $\Delta^*(f')=g|_U\ne0$ because $g\ne0$ and $X'$ is integral. By construction $(q'_U|_\cX,f'|_\cX,\Delta)$ is a~nice triple. Let $\cE'$ be the pullback of $\cF$ to $\cX'$ and $\cE$ be the restriction of $\cE'$ to $\cX$. It is clear that $\cE$ satisfies the conditions of our proposition, so this completes the proof in the case when the field $k(x)$ is infinite.

Consider the case when $k(x)$ is finite. Let $\cT$ be a finite subscheme of $\cX$ intersecting every component of $\cX_x$. Set $\cY:=\Delta(U)\cup\cZ\cup\cT$. Clearly, $\cY$ is finite over $U$; let $\{y_1,\dotsc,y_m\}$ be all of its closed points; let $\cS:=\spec(\cO_{y_1,\dotsc,y_m})$ be the corresponding semilocal scheme. Clearly, $\Delta$ factors through $\cS$.

\begin{lemma}
There exists a finite \'etale morphism $\rho\colon \cS'\to\cS$ and a~section $\Delta'\colon U\to\cS'$ such that $\rho\circ\Delta'=\Delta$, $\Delta'(x)$ is the only $k(x)$-rational point of the fiber $\cS'_x$, and for any integer $r\ge 1$ one has
\begin{equation}\label{eq:cond}
\#\{z\in\cS'_x|\;[k(z):k(x)]=r\}\le\#\{z\in\A^1_x|\;[k(z):k(x)]=r\}.
\end{equation}
\end{lemma}
\begin{proof}
Let $A:=\cO_{y_1,\dotsc,y_m}$ so that $\cS=\spec A$, let $I$ be the ideal of $\Delta(U)$, so that $A=I\oplus R$. Let $\fm_i$ be the maximal ideal of $A$ corresponding to $y_i$ so that $\fm_1$, \ldots, $\fm_m$ are all the maximal ideals of $A$. We may assume that $y_1=\Delta(x)$ so that $\fm_1$ is the ideal of $\Delta(x)$, that is, $\fm_1\supset I$.

Choose a large number $N>0$ and for each $i=2,\dotsc,m$ a monic polynomial $f_i\in(A/\fm_i)[t]$ of degree $N$ and such that
\begin{itemize}
\item if $A/\fm_i$ is finite, then $f_i$ is irreducible;
\item if $A/\fm_i$ is infinite, then $f_i$ is a product of distinct monic polynomials of degree one.
\end{itemize}
Take $f_1\in(A/\fm_1)[t]$ of the form $tg$, where $g$ is irreducible of degree $N-1$. By the Chinese Remainder Theorem applied coefficientwise we can find a monic polynomial $f\in A[t]$ such that $\deg f=N$, $f\in I+tA[t]$ and $f\bmod\fm_i=f_i$ for all $i$. Set $\cS':=\spec(A[t]/(f))$. Clearly, $\cS'$ is finite and flat over $\cS$. Thus, to check that $\cS'$ is \'etale over $\cS$ it is enough to check that the fiber of $\cS'$ over each $y_i\in\cS$ is reduced. But this follows from the definition of $f_i$.

The morphism $\Delta'$ is induced by the composition
\[
    A[t]/(f)\to A[t]/(I+tA[t])=R.
\]
Next, for every $i>1$ such that $A/\fm_i$ is finite, there is only one point of $\cS_x'$ lying over $y_i$. On the other hand, if a point of $\cS_x'$ lies over $y_i$ such that $A/\fm_i$ is infinite, then the degree of this point over $x$ is infinite as well (because we assumed that $k(x)$ is finite). Thus we have
\[
        \#\{z\in\cS'_x|\;[k(z):k(x)]=r\}\begin{cases}
        =1\text{ if }r=1,\\
        =0\text{ if }2\le r\le N-2,\\
        \le m\text{ if } r\ge N-1.
        \end{cases}
\]
It follows that $\Delta'(x)$ is the unique $k(x)$-rational point of the fiber $\cS'_x$ and that condition~\eqref{eq:cond} is satisfied for $N$ large enough.
\end{proof}

Take $\rho$, $\cS'$ and $\Delta'$ as in the above lemma. We can extend~$\rho$ and~$\cS'$ to a neighborhood of $\cS$ to get a diagram
    \[
        \xymatrix{ & \cS'\;\ar@{^{(}->}[r]\ar[d]^\rho & \cV'\ar[d]^\theta &\\
        U\ar[ur]^{\Delta'}\;\ar[r]^\Delta & \cS\;\ar@{^{(}->}[r] &\cV\;\ar@{^{(}->}[r] & \cX,}
    \]
where $\cV$ is an open subscheme of $\cX$, $\theta$ is finite \'etale. Note that $\cS\subset\cV$ implies that $\cY\subset\cV$ by the definition of $\cS$.
\begin{lemma}\label{lm:normalization}
There is an open subscheme $\cW\subset\cV$ such that $\cW\supset\cY$ and $\cW$ admits a finite $U$-morphism to $\A^1_U$.
\end{lemma}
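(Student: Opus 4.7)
The strategy, following \cite[Lem.~5.4]{PaninGrSerre1}, is to exploit the finite $U$-morphism $\pi\colon \cX \to \A^1_U$ of the nice triple to construct a new finite $U$-morphism from some $\cW$ to $\A^1_U$. Write $A := \Gamma(\cX,\cO_\cX)$, so $\pi^*$ realizes $A$ as a finite $R[t]$-algebra. Since $\cX$ is smooth (hence Cohen--Macaulay) over $U$ and $R[t]$ is regular, miracle flatness implies $A$ is finite locally free over $R[t]$, so the norm $N = N_{A/R[t]}\colon A \to R[t]$ is defined. The idea is to produce a monic polynomial $\tilde h \in R[t]$ vanishing on $\pi(\cX\setminus\cV)$ and nonvanishing on $\pi(\cY)$, and then set $\cW := \pi^{-1}((\A^1_U)_{\tilde h})$.

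To build $\tilde h$, I would first arrange, using the flexibility in the choice of the nice triple (in particular condition~(4) of Proposition~\ref{pr:nicetriples} and the étale cover $S'\to S$ from Lemma~5.3 in the finite residue field case), that the closed subsets $\pi^{-1}(\pi(\cY))$ and $\cX\setminus\cV$ are disjoint. Since the latter is closed and the former consists of finitely many closed points, prime avoidance then yields $h \in A$ vanishing on $\cX\setminus\cV$ and nonvanishing at each closed point of $\pi^{-1}(\pi(\cY))$. Set $\tilde h := N(h) \in R[t]$. The standard property of the norm gives $\tilde h(a)\ne 0$ iff $h$ is a unit in $A\otimes_{R[t]}k(a)$, iff $h$ is nonvanishing on $\pi^{-1}(a)$; hence $\tilde h$ vanishes on $\pi(\cX\setminus\cV)$ and is nonzero on $\pi(\cY)$. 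Monicity of $\tilde h$ in $t$ can be arranged by replacing $h$ with $h + \pi^*(t^M)\,g$ for $g \in A$ vanishing on $\cX\setminus\cV$ and $M$ large, so that the leading-in-$t$ behavior of the multiplication-by-$h$ matrix forces the determinant to be monic.

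By Cayley--Hamilton one has $\pi^*\tilde h = h\cdot h'$ for some $h' \in A$, so $\{\pi^*\tilde h = 0\}\supset\cX\setminus\cV$; therefore $\cW := \pi^{-1}((\A^1_U)_{\tilde h})\subset\cV$, and $\cW\supset\cY$ by the nonvanishing of $\tilde h$ on $\pi(\cY)$. The base change $\pi|_\cW\colon \cW\to(\A^1_U)_{\tilde h}$ is finite. It remains to exhibit a finite $U$-morphism $\phi\colon(\A^1_U)_{\tilde h}\to\A^1_U$: writing $\tilde h = t^n + a_{n-1}t^{n-1} + \cdots + a_0$ and setting $u := t + 1/\tilde h$, the identity $(u-t)\tilde h(t) = 1$ in $R[t,1/\tilde h]$ expands to
\[
    t^{n+1} - (u - a_{n-1})\,t^n - \sum_{i=1}^{n-1}(u a_i - a_{i-1})\,t^i - (u a_0 - 1) = 0,
\]
a monic polynomial in $t$ of degree $n+1$ with coefficients in $R[u]$. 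Hence $t$ is integral over $R[u]$, and since $1/\tilde h = u - t$ we have $R[t,1/\tilde h] = R[u,t]$, which is finite over $R[u]$. Taking $\phi$ to correspond to $R[u]\hookrightarrow R[t,1/\tilde h]$, the composition $\cW \to (\A^1_U)_{\tilde h}\xrightarrow{\phi}\A^1_U$ is the required finite $U$-morphism.

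The main obstacle lies in the first step: simultaneously arranging the disjointness $\pi^{-1}(\pi(\cY))\cap(\cX\setminus\cV) = \emptyset$ and the monicity of $\tilde h$ in $t$. Both are handled by careful exploitation of the structure of the nice triple and the norm construction, with the degree conditions of Proposition~\ref{pr:nicetriples} playing a central role when $k(x)$ is finite; once these are in place, the remainder of the argument is a formal calculation.
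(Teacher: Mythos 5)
Your plan — keep the given finite $U$-morphism $\pi\colon\cX\to\A^1_U$ of the nice triple, produce a monic $\tilde h\in R[t]$ that vanishes on $\pi(\cX\setminus\cV)$ and not on $\pi(\cY)$, set $\cW:=\pi^{-1}\bigl((\A^1_U)_{\tilde h}\bigr)$, and finish with the substitution $u=t+1/\tilde h$ — founders on the two prerequisites that you yourself flag at the end, and neither is actually delivered by the material you invoke.

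First, the disjointness $\pi^{-1}(\pi(\cY))\cap(\cX\setminus\cV)=\emptyset$ cannot be ``arranged by the flexibility in the choice of the nice triple.'' At the point in the proof of Proposition~\ref{pr:nicetriples} where this lemma is used, the nice triple $(\cX,f,\Delta)$ and its finite morphism $\pi$ have already been fixed; the degree conditions~(3) and~(4) concern the \emph{new} nice triple $\cX''$ that is constructed only \emph{after} this lemma. Nothing prevents the given $\pi$ from identifying a point of $\cY$ with a point of $\cX\setminus\cV$, and if it does, any $\tilde h$ vanishing on $\pi(\cX\setminus\cV)$ must also vanish at a point of $\pi(\cY)$, so your $\cW$ would fail to contain $\cY$.

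Second, for $\tilde h$ to be monic you need $\pi(\cX\setminus\cV)$ to lie in a closed subscheme of $\A^1_U$ finite over $U$, which forces $\cX\setminus\cV$ to be quasi-finite over $U$. This is true but not a formality: one only knows a priori that the closed fiber $\cX_x\setminus\cV_x$ is finite. The paper extracts the global statement by passing to the normalization $\bar\cX$ of $\P^1_U$ in $k(\cX)$, observing that $\bar\cX\setminus\cX$ is finite over $U$ and hence $\bar\cX\setminus\cV$ has finite closed fiber, and then using projectivity plus semicontinuity of fiber dimension to conclude that $\bar\cX\setminus\cV$ is finite over $U$. Your argument stays on the affine $\cX$ and never establishes this. (Your suggested modification $h\mapsto h+\pi^*(t^M)g$ to force monicity of $N(h)$ is also unpersuasive, since the norm is not additive and the outcome is governed by the behavior of $\{h=0\}$ near the boundary $\bar\cX\setminus\cX$, which you have not controlled.)

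What the paper does instead is to keep the compactified picture and build a \emph{fresh} finite surjective morphism $\bar\Pi\colon\bar\cX\to\P^1_U$ via Lemma~\ref{lm:finitedivisor}\eqref{FD3}, arranged so that $\bar\Pi(\cY)\subset 0\times U$ and $\bar\Pi(\bar\cX\setminus\cV)\subset\infty\times U$. Both issues — finiteness of the boundary and separation of $\cY$ from $\cX\setminus\cV$ — are resolved at once by the choice of $\bar\Pi$; one then simply takes $\cW:=\bar\Pi^{-1}(\A^1_U)$, and $\bar\Pi|_\cW$ is the desired finite $U$-morphism. Your concluding change-of-variable computation with $u=t+1/\tilde h$ is correct as algebra, but it enters only after the genuinely hard step, namely producing a morphism that separates the two sets, which is exactly what your proof does not supply.
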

\begin{proof}
By definition of nice triples we have a dominant morphism $\cX\to\A_U^1$, which gives an embedding of the field of functions of $\A_U^1$ into the field of functions of $\cX$. Let~$\overline\cX$ be the normalization of $\P^1_U$ in the field of functions of $\cX$. Note that $U$ is excellent and therefore Nagata ring, so normalization gives a finite morphism $\tilde\Pi\colon \overline\cX\to\P^1_U$. Since $\cX$ is normal, $\tilde\Pi^{-1}(\A_U^1)=\cX$. Thus $\overline\cX-\cX$ is finite over $\infty\times U$ and thus over $U$. Next, $\overline\cX_x-\cV_x=(\overline\cX_x-\cX_x)\cup(\cX_x-\cV_x)$ is finite (the second term is finite because it does not intersect $\cT_x$). It follows that $\overline\cX-\cV$ is finite over $U$ (indeed, it is projective and the closed fiber is finite). Using Lemma~\ref{lm:finitedivisor}\eqref{FD3}, we find a finite morphism $\overline\Pi\colon \overline\cX\to\P^1_U$ such that $\overline\Pi(\cY)\subset0\times U$ and $\overline\Pi(\overline\cX-\cV)\subset\infty\times U$. It remains to take $\cW:=\overline\Pi^{-1}(\A^1_U)$.
\end{proof}
Let $\cW$ be as in the above lemma. Let $\cX''$ be the connected component of $\theta^{-1}(\cW)$ containing $\Delta'(U)$. Set $q''_U:=q_U\circ\theta|_{\cX''}$ and $f''=f\circ\theta|_{\cX''}$. Then $(q''_U\colon \cX''\to U,f'',\Delta')$ is the sought-for nice triple. The proof of Proposition~\ref{pr:nicetriples} is complete.
\end{proof}

Let $(q_U,f,\Delta)$ be a nice triple provided by the above proposition. We may assume that $f$ vanishes at $\Delta(x)$ (so that $\Delta(x)\in\cZ$), otherwise the statement of Theorem~\ref{th:GrSerre} is obvious. If $k(x)$ is finite, then by condition~\eqref{cond3} of Proposition~\ref{pr:nicetriples} $\Delta(x)$ is the only $k(x)$-rational point of $\cZ_x$.

\begin{proposition}\label{pr:nicetripledescend}
Notation being as in Theorem~\ref{th:GrSerre}, set $R:=\cO_{X,x}$ and $U:=\spec R$. Let $(q_U,f,\Delta)$ be a nice triple over $U$ such that $\Delta(x)\in\cZ$. Assume that this nice triple satisfies conditions~\eqref{cond3} and~\eqref{cond4} of Proposition~\ref{pr:nicetriples} if $k(x)$ is finite. Then there are a finite surjective $U$-morphism $\sigma\colon \cX\to\A^1_U$, a monic polynomial $h\in R[t]$ vanishing on $\sigma(\cZ)$, and an element $g\in\Gamma(\cX,\cO_\cX)$ such that\\
\stepzero\noindstep the morphism $\sigma_g:=\sigma|_{\cX_g}$ is \'etale, where $\cX_g$ is the open subscheme of $\cX$ given by $\{g\ne0\}$;\\
\noindstep\label{cond:desc} the data $(R[t],\sigma_g^*\colon R[t]\to\Gamma(\cX,\cO_\cX)_g,h)$ satisfy the hypothesis of~\cite[Prop.~2.6]{ColliotTeleneOjanguren}, that is, $R[t]$ is Noetherian, $\Gamma(\cX,\cO_\cX)_g$ is finitely generated as an $R[t]$-algebra, $\sigma_g^*(h)$ is not a zero divisor in $\Gamma(\cX,\cO_\cX)_g$, and $\sigma_g^*$ induces an isomorphism
    \begin{equation*}
            R[t]/(h)\simeq\Gamma(\cX,\cO_\cX)_g/(\sigma_g^*(h)\cdot\Gamma(\cX,\cO_\cX)_g);
    \end{equation*}
    \noindstep $\Delta(U)\cup\cZ\subset\cX_g$.
\end{proposition}
\begin{proof}
If $R$ contains a field, then this follows from the proofs of~\cite[Thm~3.8 and Cor.~7.2]{PaninMovingLemma}. In our case the proof is completely similar but we will still give it for the sake of completeness. Let, as in the proof of Lemma~\ref{lm:normalization}, $\overline\cX$ be the normalization of $\P^1_U$ in the field of functions of $\cX$, so we have a Cartesian diagram
    \[
    \begin{CD}
    \cX @>\Pi >>\A^1_U\\
    @VVV @VVV\\
    \overline\cX @>\tilde\Pi >>\P^1_U
    \end{CD}
    \]
with finite surjective horizontal morphisms and vertical morphisms being open embeddings.

Consider the reduced finite scheme $(\cZ_x)_{red}$. We can find a closed embedding $\iota_1\colon(\cZ_x)_{red}\to\A_x^1$. Indeed, if $k(x)$ is finite, this follows from condition~\eqref{cond4} in Proposition~\ref{pr:nicetriples} together with the fact that a finite extension of a finite field is determined up to isomorphism by its degree. If $k(x)$ is infinite, the statement follows from the fact that for any finite extension of $k(x)$ there are infinitely many points in $\A^1_x$ whose residue field is isomorphic to this extension.

Next, let $(\cZ_x)_{(2)}$ be the first infinitesimal neighborhood of $(\cZ_x)_{red}$ in $\cX_x$. We can extend $\iota_1$ to a closed embedding
$\iota_2\colon(\cZ_x)_{(2)}\to\A_x^1$ because $\cX_x$ is smooth of dimension one over $k(x)$.

Let $\cO(1)$ be the canonical line bundle on $\P^1_U$ and set $\cL:=\tilde\Pi^*\cO(1)$. Let $s_0$ (resp.~$s_\infty$) be the section of $\cL$ vanishing exactly on $\tilde\Pi^{-1}(\infty\times U)$ (resp. on $\tilde\Pi^{-1}(0\times U))$.

Since $\cX_x$ is of pure dimension one and $\cZ_x$ is a finite scheme, we can find a closed subset $W\subset\cX_x$ such that $W\cap\cZ_x=\emptyset$ and $W$ has exactly one point on each irreducible component of $\cX_x$.

\begin{lemma}\label{lm:section}
    For $n\gg0$ there is a section $s_1\in H^0(\overline\cX,\cL^{\otimes n})$ such that\\
    \stepzero\noindstep The restriction of $s_1$ to $\tilde\Pi^{-1}(\infty\times U)$ coincides with $s_\infty^n$.\\
    \noindstep $s_1$ equals zero on $W$.\\
    \noindstep\label{S3} The restriction of $s_1$ to $\cZ_{(2)}$ is equal to $\iota_2^*(t)\cdot s_0^n$, where $t$ is a coordinate on $\A_x^1$.
    \end{lemma}
\begin{proof}
        Let $\cI$ be the ideal sheaf of $\tilde\Pi^{-1}(\infty\times U)\cup W\cup\cZ_{(2)}$ and let $p_U\colon\overline\cX\to U$ and  $pr_U\colon\P^1_U\to U$ be the projections. Then by the projection formula for $n$ large enough we have
        \[
            R^1(p_U)_*(\cL^{\otimes n}\otimes\cI)=R^1(pr_U)_*(\cO(1)^{\otimes n}\otimes(\tilde\Pi)_*\cI)=0.
        \]
        The rest of the proof is completely similar to the proof of Lemma~\ref{lm:finitedivisor}\eqref{FD1}.
    \end{proof}
Let $s_1$ be as in the lemma, we set $\sigma:=s_1/s_0^n$.

\emph{Claim 1. The morphism $\sigma$ is finite, flat, and surjective.} Indeed, consider the projective morphism $\bar\sigma\colon\overline\cX\to\P^1_U$ given by $[s_1:s_0^n]$. Note that, since $\tilde\Pi$ is finite, every one-dimensional irreducible component of $\overline\cX_x$ contains a point of $\tilde\Pi^{-1}(\infty\times x)$. On the other hand, every such component contains a point of $W$. Now it follows from the construction that $\bar\sigma$ is non-constant on each one-dimensional component of $\overline\cX_x$ (because every such component contains a point of $W$ and a point of $\tilde\Pi^{-1}(\infty\times x)$). Arguing as in the proof of Lemma~\ref{lm:finitedivisor}\eqref{FD3}, we see that $\bar\sigma$ is finite and surjective. Since $\cX=\bar\sigma^{-1}(\A^1_U)$, we see that $\sigma$ is also finite and surjective. Since $\cX$ and $\A^1_U$ are regular schemes, the flatness follows from~~\cite[Thm.~23.1]{MatsumuraCommRingTh}. Claim~1 is proved. \qed

Since $\sigma$ is flat, the set of points, where it is \'etale, is open. Denote this open subset by $\cX'$.

\emph{Claim 2. $\Delta(U)\cup\cZ\subset\cX'$.} First of all, the morphism $\sigma$ is \'etale at the points of~$\cZ_x$. Indeed, since $\sigma$ is flat, it is enough to show that $\sigma_x\colon\cX_x\to\A^1_x$ is \'etale at the points of~$\cZ_x$. This follows easily from condition~\eqref{S3} of Lemma~\ref{lm:section}. Since all the closed points of $\cZ$ are in $\cZ_x$, it follows that $\cZ\subset\cX'$. Since the only closed point $\Delta(x)$ of $\Delta(U)$ is also in $\cZ_x$, we see that $\Delta(U)\subset\cX'$. \qed

\emph{Claim 3. $\sigma|_\cZ$ is a closed embedding.} Recall that $U=\spec R$. Let $\fm_x$ be the maximal ideal of $x\in U$. We first show that $\sigma|_{\cZ_x}\colon\cZ_x\to\A^1_x$ is a closed embedding. Since this morphism is set-theoretically injective, it is enough to show that for every closed point $y\in\cZ_x$ the induced morphism $(R/\fm_x)[t]\to\cO_{\cZ_x,y}$ is surjective. By construction the composition
\[
    (R/\fm_x)[t]\to\cO_{\cZ_x,y}\to\cO_{\cZ_x,y}/\fm_y^2
\]
is surjective, where $\fm_y$ is the maximal ideal of $\cO_{\cZ_x,y}$ and the statement follows from the Nakayama Lemma.

It follows that the morphism $(R/\fm_x)[t]\to\Gamma(\cZ_x,\cO_{\cZ_x})$ induced by $\sigma$ is surjective. By the Nakayama Lemma it implies that
the morphism of $R$-modules $R[t]\to\Gamma(\cZ,\cO_\cZ)$ induced by $\sigma$ is also surjective because $\Gamma(\cZ,\cO_\cZ)$ is a finite $R$-module. Claim~3 follows. \qed

Thus we can identify $\sigma(\cZ)$ with a closed subscheme of $\A^1_U$. Moreover, $\cZ\simeq\sigma(\cZ)$.

\emph{Claim 4. We have $\sigma^{-1}(\sigma(\cZ))=\cZ\sqcup\cZ'$ for some closed subscheme $\cZ'\subset\cX$ and $\cZ'\cap\Delta(U)=\emptyset$.} Indeed, the \'etale morphism $\sigma|_{\cX'}$ has a section over $\sigma(\cZ)$. This section can be viewed as a morphism $s\colon\cZ\to\sigma^{-1}(\sigma(\cZ))$. By~\cite[Cor.~17.3.5.]{EGAIV.4}, this morphism is \'etale, so it is an open morphism. But it is also a closed embedding, so $\sigma^{-1}(\sigma(\cZ))=\cZ\sqcup\cZ'$ for some closed subscheme $\cZ'$. The unique closed point $\Delta(x)$ of $\Delta(U)$ is in $\cZ$, so it is not in $\cZ'$. It follows that $\cZ'\cap\Delta(U)=\emptyset$. \qed

\emph{Claim 5. There is a monic polynomial $h\in R[t]$ such that the zero locus of $h$ coincides with $\sigma(\cZ)$.} Let $\cZ_1$, \ldots, $\cZ_n$ be the irreducible components of $\cZ_{red}$. Since $\cX$ is regular, it is locally factorial, so the principal ideal $(f)$ can be written as $\fp_1^{r_1}\dotso\fp_n^{r_n}$, where $\fp_i\subset\Gamma(\cX,\cO_\cX)$ is the prime ideal corresponding to $\cZ_i$ and $r_i$ are some positive integers. Note that $\fp_i$ is of height one.

Let $\fq_i$ be the preimage of $\fp_i$ under $\sigma^*\colon R[t]\to\Gamma(\cX,\cO_\cX)$. By the going-down property of flat extensions, $\fq_i$ is a height one prime ideal. Since $R[t]$ is factorial, the ideal $\fq_i$ is principal. Write $\fq_i=(h_i)$ and set $h=h_1^{r_1}\dotso h_n^{r_n}$.
By Claim~3 $\sigma|_{\cZ_i}$ is a closed embedding, so $(h_i)$ is the ideal of $\sigma(\cZ_i)$.

Next, the closed embedding $\sigma|_\cZ:\cZ\to\A^1_U$ corresponds to the surjective homomorphism of rings $R[t]\to\Gamma(\cZ,\cO_\cZ)=\Gamma(\cX,\cO_\cX)/(f)$. Clearly, $h$ is in the kernel of this morphism. We need to show that the induced homomorphism
\[
    R[t]/(h)\to\Gamma(\cX,\cO_\cX)/(f)
\]
is an isomorphism. Since $\cX$ is affine, we can find $g'\in\Gamma(\cX,\cO_\cX)$ such that $g'|_\cZ=1$ and $g'|_{\cZ'}=0$, where $\cZ'$ is as in Claim~4. Let $\cX_{g'}$ be the corresponding principal open subset of $\cX$ and let $\sigma_{g'}:=\sigma|_{\cX_{g'}}$. Then the canonical morphism $\Gamma(\cX,\cO_\cX)/(f)\to\Gamma(\cX_{g'},\cO_{\cX_{g'}})/(f)$ is an isomorphism, so it is enough to show that the composed homomorphism
\begin{equation}\label{eq:homo}
    R[t]/(h)\to\Gamma(\cX,\cO_\cX)/(f)\to\Gamma(\cX_{g'},\cO_{\cX_{g'}})/(f)
\end{equation}
is an isomorphism. Consider the filtration of the $R[t]$-module $R[t]/(h)$ by the quotients of principal ideals:
\begin{multline*}
    R[t]/(h)\supset(h_1)/(h)\supset\dotso\supset(h_1^{r_1})/(h)\supset(h_1^{r_1}h_2)/(h)\supset
   \\
   \dotso\supset(h_1^{r_1}h_2^{r_2})/(h)\supset\dotso\supset(h)/(h)=0.
\end{multline*}
We also have a similar filtration of the $R[t]$-module $M:=\Gamma(\cX_{g'},\cO_{\cX_{g'}})/(f)$:
\[
    M\supset h_1M\supset\dotso\supset h_1^{r_1}M\supset h_1^{r_1}h_2M\supset
   \dotso\supset h_1^{r_1}h_2^{r_2}M\supset\dotso\supset hM=0.
\]
The homomorphism~\eqref{eq:homo} is a homomorphism of filtered $R[t]$-modules, so we only need to check that it induces an isomorphism on the associated graded modules. This boils down to checking that for each $i$ the canonical homomorphism
\[
    R[t]/(h_i)\to\Gamma(\cX_g,\cO_{\cX_g})/(h_i)
\]
is an isomorphism. Since $(h_i)$ is the ideal of $\sigma(\cZ_i)$, this is equivalent to the fact that $\sigma$ induces an isomorphism $\sigma_g^{-1}(\sigma(\cZ_i))\to\sigma(\cZ_i)$, which, in turn, follows from the definition of $g$. Claim~5 is proved. \qed

Now we can finish the proof Proposition~\ref{pr:nicetripledescend}. The closed subscheme $\Delta(U)\cup\cZ$ is contained in the open subset $\cX'-\cZ'$. Thus we can find $g\in H^0(\cX,\cO_\cX)$ such that $\Delta(U)\cup\cZ\subset\cX_g\subset\cX'-\cZ'$. By definition of $\cX'$ the morphism $\sigma_g:=\sigma|_{\cX_g}$ is \'etale. Thus, we only need to verify condition~\eqref{cond:desc} of the proposition. Obviously, $R[t]$ is Noetherian and $\Gamma(\cX,\cO_\cX)_g$ is finitely generated as an $R[t]$-algebra. Since $\cX$ is integral and $\sigma$ is surjective, $\sigma_g^*(h)$ is not a zero divisor in $\Gamma(\cX,\cO_\cX)_g$. We have
\[
    \sigma_g^{-1}(\cZ)=(\cZ\sqcup\cZ')\cap\cX_g=\cZ.
\]
Thus $\sigma_g$ induces an isomorphism $\sigma_g^{-1}(\sigma(\cZ))\to\sigma(\cZ)$. This is equivalent to the isomorphism of part~\eqref{cond:desc} of Proposition~\ref{pr:nicetripledescend}. The proof is complete.
\end{proof}

\subsection{End of the proof of Theorem~\ref{th:GrSerre}}
\begin{proposition}
The notation and assumptions being as in Theorem~\ref{th:GrSerre}, put $U:=\spec\cO_{X,x}$ and let $\cG$ be a generically trivial principal $\bG$-bundle over $U$. Then there is a $\bG$-bundle $\cF$ over $\A^1_U$ such that
\begin{itemize}
\item $\cF$ is trivial over the complement of a closed subscheme $\cY\subset\A^1_U$ such that $\cY$ is finite over $U$;
\item $\cF|_{0\times U}\simeq\cG$.
\end{itemize}
\end{proposition}
\begin{proof}
By Proposition~\ref{pr:nicetriples}, there is a nice triple $(q_U\colon \cX\to U,f,\Delta)$ and a $\bG$-bundle $\cE$ over $\cX$ such that\\
\stepzero\noindstep $\Delta^*\cE\simeq\cG$;\\
\noindstep $\cE$ is trivial over the complement of the zero locus $\cZ$ of $f$.

Moreover, if the field $k(x)$ is finite, this nice triple satisfies assumptions~\eqref{cond3} and~\eqref{cond4} of Proposition~\ref{pr:nicetriples}. As we explained before the Proposition~\ref{pr:nicetripledescend}, we may assume that $\Delta(x)\in\cZ$. Let a
$U$-morphism $\sigma\colon \cX\to\A^1_U$, a monic polynomial $h\in R[t]$ vanishing on $\sigma(\cZ)$, and an element $g\in\Gamma(\cX,\cO_\cX)$
be those provided by Proposition~\ref{pr:nicetripledescend}. After performing an affine transformation of $\A^1_U$, we may assume that $\Delta^*(\sigma)$ coincides with the closed embedding $0\times U\hookrightarrow\A^1_U$. Condition~\eqref{cond:desc} of Proposition~\ref{pr:nicetripledescend} together with~\cite[Prop.~2.6]{ColliotTeleneOjanguren} shows that the diagram
    \[
    \begin{CD}
    \cX_{g\sigma^*(h)}@>>>\cX_g\\
    @VVV @V\sigma_gVV\\
    (\A^1_U)_h @>>>\A^1_U
    \end{CD}
    \]
can be used to glue principal $\bG$-bundles in the following sense: given a $\bG$-bundle over $(\A^1_U)_h$, a $\bG$-bundle over $\cX_g$, and an isomorphism of their pullbacks to $\cX_{g\sigma^*(h)}$, we can glue the bundles to make a $\bG$-bundle over $\A^1_U$. In particular, since $\cX_{g\sigma^*(h)}\subset\cX_f$, we can glue $\cE|_{\cX_g}$ with the trivial $\bG$-bundle over $(\A^1_U)_h$ to make a desired $\bG$-bundle~$\cF$ over $\A^1_U$.

Clearly, all the conditions of the proposition are satisfied with $\cY:=\{h=0\}$, which is finite over $U$ because $h$ is monic.
\end{proof}

It remains to apply Theorem~\ref{th:A1} to $R=\cO_{X,x}$, $\bH:=\bG_{X,x}=\bG\times_\Lambda U$, and $\cF$. The proof of Theorem~\ref{th:GrSerre} is complete. \qed

\begin{remark}\label{rm:nicetriples}
When the residue field $k(x)$ is infinite, one can prove the main theorem without using the nice triples by descending the $\bG$-bundle $\cF$ from Proposition~\ref{pr:QElFib} to $\A^1_S$ directly and applying Theorem~\ref{th:A1}. This would not work if $k(x)$ is finite because the analogue of conditions~\eqref{cond3} and~\eqref{cond4} of Proposition~\ref{pr:nicetriples} might fail for the special fiber of the quasi-elementary fibration $p:X'\to S$. The advantage of nice triples to quasi-elementary fibrations, is that the original principal bundle is the pullback via a closed morphism $\Delta$. Thus we were able to ``improve'' the original nice triple by replacing it with an \'etale base change having a section over $\Delta(U)$.
\end{remark}

\begin{remark}\label{rm:NoNisnevich}
  It is not necessary to use~\cite{Nisnevich1} in the case, when $\dim X=1$, as we can easily re-prove the required statement in this case. Indeed, let a $\bG_{X,x}$-bundle $\cG$ have a rational section. Then it is generically trivial, so it admits a generic reduction to any Borel subgroup of $\bG_{X,x}$. By Proposition~\ref{pr:codim2}, such a reduction extends to $\spec\cO_{X,x}$ (because $\dim\cO_{X,x}=1$). By~\cite[Exp.~XXVI, Cor.~2.3]{SGA3-3} we see that $\cG$ has a reduction to a split maximal torus of $\bG_{X,x}$. Now it is easy to see that $\cG$ is trivial.
\end{remark}

\section{Proofs of Theorems~\ref{th:QuadForms} and~\ref{th:QuadForms2}}\label{sect:QuadForms} We keep the notation and assumptions from the statements of the theorems.

\begin{proof}[Proof of Theorem~\ref{th:QuadForms2}]
Let $\bO_n$ be the $R$-group scheme of orthogonal transformations of $Q_n$. The scheme of isomorphisms $\Isom(Q,Q_n)$ is a principal $\bO_n$-bundle over $\spec R$. This bundle is locally trivial in the fppf topology. (Note that if $n$ is odd and $2\notin R^\times$, then~$\bO_n$ is not smooth over $R$.) Thus, we only need to show that the natural morphism $H^1_{\text{fppf}}(R,\bO_n)\to H^1_{\text{fppf}}(K,\bO_n)$ has a trivial kernel.

Note that $\bSO_n$ is a split reductive group scheme. If $n$ is odd, we have $\bO_n\simeq\mu_2\times\bSO_n$, where $\mu_2$ is the group scheme of square roots of unity. Since we assume that the Grothendieck--Serre conjecture holds for $R$ and $\bSO_n$, the natural morphism $H^1_{\text{fppf}}(R,\bSO_n)\to H^1_{\text{fppf}}(K,\bSO_n)$ has a trivial kernel (recall that for smooth group schemes there is no difference between fppf principal bundles and \'etale principal bundles). On the other hand, we have
$H^1_{\text{fppf}}(R,\mu_2)=R^\times/(R^\times)^2$ (because $H^1(R,\cO_R^\times)=1$, since $R$ is local). Similarly, $H^1_{\text{fppf}}(K,\mu_2)=K^\times/(K^\times)^2$. It follows now from factoriality of $R$ that the morphism
$H^1_{\text{fppf}}(R,\mu_2)\to H^1_{\text{fppf}}(K,\mu_2)$ has a trivial kernel. This completes the proof in the case, when $n$ is odd.

If $n$ is even, we have an exact sequence $1\to\bSO_n\to\bO_n\to\Z/2\Z\to1$ by~\cite[Ch.~4, Prop.~5.2.2]{KnusBook}. This gives an exact sequence of cohomology
\[
\xymatrix{\Z/2\Z(R) \ar[r]\ar[d]^= & H^1_{\text{fppf}}(R,\bSO_n) \ar[r]\ar[d] & H^1_{\text{fppf}}(R,\bO_n) \ar[r]\ar[d] & H^1_{\text{fppf}}(R,\Z/2\Z) \ar[d]\\
\Z/2\Z(K)\ar[r] & H^1_{\text{fppf}}(K,\bSO_n) \ar[r] & H^1_{\text{fppf}}(K,\bO_n) \ar[r] & H^1_{\text{fppf}}(K,\Z/2\Z).
}
\]
Note that the right vertical arrow has a trivial kernel. Next, the morphism $\bO_n(K)\to\Z/2\Z(K)$ is surjective (again by~\cite[Ch.~4, Prop.~5.2.2]{KnusBook}). Again, by our assumption the middle vertical arrow has a trivial kernel. Now an easy diagram chase proves the claim. The proof of Theorem~\ref{th:QuadForms2} is complete.
\end{proof}

Assume now that 2 is invertible in $R$.
\begin{proof}[Proof of Theorem~\ref{th:QuadForms}]
According to~\cite[I, Cor.~4.7(i)]{baeza2006quadratic}, the orthogonal sum $Q\bot(-Q)$ is isomorphic to $Q_{2n}$. Applying Theorem~\ref{th:QuadForms2}, we see that $Q'\bot(-Q)\simeq Q\bot(-Q)$. Since 2 is invertible in $R$, we may apply Witt's cancellation theorem (see~\cite[I, Cor.~4.3]{baeza2006quadratic}) to conclude that $Q$ and $Q'$ are isomorphic.
\end{proof}

\bibliographystyle{abbrv}
\bibliography{../../../../RF}

\begin{thebibliography}{10}

\bibitem{SGA4-3}
{\em Th\'eorie des topos et cohomologie \'etale des sch\'emas. {T}ome 3}.
\newblock Lecture Notes in Mathematics, Vol. 305. Springer-Verlag, Berlin,
  1973.
\newblock S{\'e}minaire de G{\'e}om{\'e}trie Alg{\'e}brique du Bois-Marie
  1963--1964 (SGA 4), Dirig{\'e} par M. Artin, A. Grothendieck et J. L.
  Verdier. Avec la collaboration de P. Deligne et B. Saint-Donat.

\bibitem{SGA1}
{\em Rev\^{e}tements \'{e}tales et groupe fondamental ({SGA} 1)}, volume~3 of
  {\em Documents Math\'{e}matiques (Paris) [Mathematical Documents (Paris)]}.
\newblock Soci\'{e}t\'{e} Math\'{e}matique de France, Paris, 2003.
\newblock S\'{e}minaire de g\'{e}om\'{e}trie alg\'{e}brique du Bois Marie
  1960--61. [Algebraic Geometry Seminar of Bois Marie 1960-61], Directed by A.
  Grothendieck, With two papers by M. Raynaud, Updated and annotated reprint of
  the 1971 original [Lecture Notes in Math., 224, Springer, Berlin; MR0354651
  (50 \#7129)].

\bibitem{baeza2006quadratic}
R.~Baeza.
\newblock {\em Quadratic forms over semilocal rings}, volume 655.
\newblock Springer, 2006.

\bibitem{CesnaviciusGrSerre}
K.~{\v{C}esnavi\v{c}ius}.
\newblock {Grothendieck-Serre in the quasi-split unramified case}.
\newblock {\em arXiv e-prints, 2009.05299}, 2020.

\bibitem{ColliotTeleneOjanguren}
J.-L. Colliot-Th{\'e}l{\`e}ne and M.~Ojanguren.
\newblock Espaces principaux homog\`enes localement triviaux.
\newblock {\em Inst. Hautes \'Etudes Sci. Publ. Math.}, (75):97--122, 1992.

\bibitem{ColliotTS1979fibres}
J.-L. Colliot-Th{\'e}l{\`e}ne and J.-J. Sansuc.
\newblock Fibr{\'e}s quadratiques et composantes connexes r{\'e}elles.
\newblock {\em Mathematische Annalen}, 244(2):105--134, 1979.

\bibitem{ColliotTheleneSansuc}
J.-L. Colliot-Th{\'e}l{\`e}ne and J.-J. Sansuc.
\newblock Principal homogeneous spaces under flasque tori: applications.
\newblock {\em J. Algebra}, 106(1):148--205, 1987.

\bibitem{SGA3-3}
M.~Demazure and A.~Grothendieck.
\newblock {\em Sch\'emas en groupes. {III}: {S}tructure des sch\'emas en
  groupes r\'eductifs}.
\newblock S\'eminaire de G\'eom\'etrie Alg\'ebrique du Bois Marie 1962/64 (SGA
  3). Dirig\'e par M. Demazure et A. Grothendieck. Lecture Notes in
  Mathematics, Vol. 153. Springer-Verlag, Berlin, 1970.

\bibitem{FedorovExotic}
R.~Fedorov.
\newblock Affine {G}rassmannians of group schemes and exotic principal bundles
  over {$\mathbb A^1$}.
\newblock {\em Amer.~Journal of Math.}, 138(4):879--906, 2016.

\bibitem{FedorovPanin}
R.~Fedorov and I.~Panin.
\newblock A proof of the {G}rothendieck--{S}erre conjecture on principal
  bundles over regular local rings containing infinite fields.
\newblock {\em Publications math\'ematiques de l'IH\'ES}, 122(1):169--193,
  2015.

\bibitem{GilleTorseurs}
P.~Gille.
\newblock Torseurs sur la droite affine.
\newblock {\em Transform. Groups}, 7(3):231--245, 2002.

\bibitem{GrothendieckTorsion}
A.~Grothendieck.
\newblock Torsion homologique et sections rationnelles.
\newblock In {\em Anneaux de Chow et applications, S\'eminaire Claude
  Chevalley}, number~3. Paris, 1958.

\bibitem{EGAII}
A.~Grothendieck.
\newblock \'{E}l\'ements de g\'eom\'etrie alg\'ebrique. {II}. \'{E}tude globale
  \'el\'ementaire de quelques classes de morphismes.
\newblock {\em Inst. Hautes \'Etudes Sci. Publ. Math.}, (8):222, 1961.

\bibitem{EGAIII-1}
A.~Grothendieck.
\newblock \'{E}l\'ements de g\'eom\'etrie alg\'ebrique. {III}. \'{E}tude
  cohomologique des faisceaux coh\'erents. {I}.
\newblock {\em Inst. Hautes \'Etudes Sci. Publ. Math.}, (11):167, 1961.

\bibitem{EGAIV-1}
A.~Grothendieck.
\newblock \'{E}l\'ements de g\'eom\'etrie alg\'ebrique. {IV}. \'{E}tude local
  des sch\'emas et des morphismes des sch\'emas, premi{\`e}re partie
  (r{\'e}dig{\'e}s avec la collaboration de {J}ean {D}ieudonn{\'e}).
\newblock {\em Institut des Hautes {\'E}tudes Scientifiques. Publications
  Math{\'e}matiques}, 20, 1964.

\bibitem{EGAIV-2}
A.~Grothendieck.
\newblock \'{E}l\'ements de g\'eom\'etrie alg\'ebrique. {IV}. \'{E}tude local
  des sch\'emas et des morphismes des sch\'emas, seconde partie.
\newblock {\em Inst. Hautes \'Etudes Sci. Publ. Math.}, (24):5--231, 1965.

\bibitem{EGAIV-3}
A.~Grothendieck.
\newblock \'{E}l\'ements de g\'eom\'etrie alg\'ebrique. {IV}. \'{E}tude local
  des sch\'emas et des morphismes des sch\'emas, troisi\`eme partie.
\newblock {\em Inst. Hautes \'Etudes Sci. Publ. Math.}, (28):255, 1966.

\bibitem{EGAIV.4}
A.~Grothendieck.
\newblock \'{E}l\'ements de g\'eom\'etrie alg\'ebrique. {IV}. \'{E}tude locale
  des sch\'emas et des morphismes de sch\'emas {IV}.
\newblock {\em Inst. Hautes \'Etudes Sci. Publ. Math.}, (32):361, 1967.

\bibitem{GrothendieckBrauer2}
A.~Grothendieck.
\newblock Le groupe de {B}rauer. {II}. {T}h\'eorie cohomologique.
\newblock In {\em Dix {E}xpos\'es sur la {C}ohomologie des {S}ch\'emas}, pages
  67--87. North-Holland, Amsterdam, 1968.

\bibitem{FGA1}
A.~Grothendieck.
\newblock Technique de descente et th\'eor\`emes d'existence en g\'eometrie
  alg\'ebrique. {I}. {G}\'en\'eralit\'es. {D}escente par morphismes
  fid\`element plats.
\newblock In {\em S\'eminaire {B}ourbaki, {V}ol.\ 5, {E}xp.\ {N}o.\ 190.},
  pages 299--327. Soc. Math. France, Paris, 1995.

\bibitem{Guo2019GrSerreDedekind}
N.~Guo.
\newblock The {G}rothendieck--{S}erre conjecture over semilocal {D}edekind
  rings.
\newblock {\em Transformation Groups}, pages 1--21, 2020.

\bibitem{KnusBook}
M.-A. Knus.
\newblock {\em Quadratic and {H}ermitian forms over rings}, volume 294 of {\em
  Grundlehren der Mathematischen Wissenschaften [Fundamental Principles of
  Mathematical Sciences]}.
\newblock Springer-Verlag, Berlin, 1991.
\newblock With a foreword by I. Bertuccioni.

\bibitem{Liu2002AG}
Q.~Liu and R.~Erne.
\newblock {\em Algebraic geometry and arithmetic curves}, volume~6.
\newblock Oxford University Press on Demand, 2002.

\bibitem{MatsumuraCommRingTh}
H.~Matsumura.
\newblock {\em Commutative ring theory}, volume~8 of {\em Cambridge Studies in
  Advanced Mathematics}.
\newblock Cambridge University Press, Cambridge, second edition, 1989.
\newblock Translated from the Japanese by M. Reid.

\bibitem{MumfordEtAl1994GIT}
D.~Mumford, J.~Fogarty, and F.~Kirwan.
\newblock {\em Geometric invariant theory}, volume~34.
\newblock Springer Science \& Business Media, 1994.

\bibitem{Nisnevich1}
Y.~Nisnevich.
\newblock Espaces homog\`enes principaux rationnellement triviaux et
  arithm\'etique des sch\'emas en groupes r\'eductifs sur les anneaux de
  {D}edekind.
\newblock {\em C. R. Acad. Sci. Paris S\'er. I Math.}, 299(1):5--8, 1984.

\bibitem{Nisnevich2}
Y.~Nisnevich.
\newblock Rationally trivial principal homogeneous spaces, purity and
  arithmetic of reductive group schemes over extensions of two-dimensional
  regular local rings.
\newblock {\em C. R. Acad. Sci. Paris S\'er. I Math.}, 309(10):651--655, 1989.

\bibitem{PaninNiceTriples}
I.~Panin.
\newblock {Nice triples and the Grothendieck--Serre conjecture concerning
  principal G-bundles over reductive group schemes}.
\newblock {\em Duke Math. Journal}, 168(2):351--375, 2019.

\bibitem{PaninStavrovaVavilov}
I.~Panin, A.~Stavrova, and N.~Vavilov.
\newblock On {G}rothendieck-{S}erre's conjecture concerning principal
  {$G$}-bundles over reductive group schemes: {I}.
\newblock {\em Compos. Math.}, 151(3):535--567, 2015.

\bibitem{PaninMovingLemma}
I.~A. Panin.
\newblock Nice triples and moving lemmas for motivic spaces.
\newblock {\em Izvestiya: Mathematics}, 83(4):796, 2019.

\bibitem{PaninFiniteFieldsIzvestiya}
I.~A. Panin.
\newblock Proof of the {G}rothendieck--{S}erre conjecture on principal bundles
  over regular local rings containing a field.
\newblock {\em Izvestiya: Mathematics}, 84(4):780, 2020.

\bibitem{PaninPurity17}
I.~A. Panin.
\newblock Two purity theorems and the {G}rothendieck--{S}erre conjecture
  concerning principal-bundles.
\newblock {\em Sbornik: Mathematics}, 211(12):1777--1794, 2020.

\bibitem{PaninStavrovaDedekind}
I.~A. Panin and A.~K. Stavrova.
\newblock On the {G}rothendieck--{S}erre conjecture concerning principal
  {G}-bundles over semilocal {D}edekind domains.
\newblock {\em Journal of Mathematical Sciences}, 222(4):453--462, 2017.

\bibitem{PoonenOnBertini}
B.~Poonen.
\newblock Bertini theorems over finite fields.
\newblock {\em Ann. of Math. (2)}, 160(3):1099--1127, 2004.

\bibitem{Popescu}
D.~Popescu.
\newblock General {N}\'eron desingularization and approximation.
\newblock {\em Nagoya Math. J.}, 104:85--115, 1986.

\bibitem{SerreFibres}
J.-P. Serre.
\newblock Espaces fibr\'es alg\'ebrique.
\newblock In {\em Anneaux de Chow et applications, S\'eminaire Claude
  Chevalley}, number~3. Paris, 1958.

\bibitem{SpivakovskyPopescu}
M.~Spivakovsky.
\newblock A new proof of {D}. {P}opescu's theorem on smoothing of ring
  homomorphisms.
\newblock {\em J. Amer. Math. Soc.}, 12(2):381--444, 1999.

\bibitem{SwanOnPopescu}
R.~G. Swan.
\newblock N\'eron-{P}opescu desingularization.
\newblock In {\em Algebra and geometry ({T}aipei, 1995)}, volume~2 of {\em
  Lect. Algebra Geom.}, pages 135--192. Int. Press, Cambridge, MA, 1998.

\end{thebibliography}
\end{document}